\title{Bigness of the tangent bundle of del Pezzo surfaces and $D$-simplicity}
\author{Devlin Mallory\thanks{The author was supported by NSF Graduate Research Fellowship grant DGE-1256260, as well as partially supported by NSF grant DMS-1701622.}}
\DeclareMathOperator{\Diff}{\mathcal{D}iff}
\theoremstyle{theorem}
\newtheorem{thm}{Theorem}
\newtheorem*{thmm}{Theorem}
\numberwithin{thm}{section}
\newtheorem{lem}[thm]{Lemma}
\newtheorem{cor}[thm]{Corollary}
\newtheorem{quest}[thm]{Question}
\newtheorem{prop}[thm]{Proposition}
\theoremstyle{definition}
\newtheorem{dfn}[thm]{Definition}
\newtheorem{exa}[thm]{Example}
\newtheorem{rem}[thm]{Remark}
\begin{document}
\maketitle
\abstract{We consider the question of simplicity of a ring $R$ under the action of  its ring of differential operators $D_R$. We give examples to show that even when $R$ is Gorenstein and has rational singularities $R$ need not be a simple $D_R$-module; for example, this is the case when $R$ is the homogeneous coordinate ring of a smooth cubic surface. Our examples are homogeneous coordinate rings of smooth Fano varieties, and our proof proceeds by showing that the tangent bundle of such a variety need not be big. We also give a partial converse showing that when $R$ is the homogeneous coordinate ring of a smooth projective variety $X$, embedded by some multiple of its canonical divisor, then simplicity of $R$ as a $D_R$-module implies that $X$ is Fano and thus $R$ has rational singularities.} 

\section{Introduction}

Given a $k$-algebra $R$, one can consider the ring $D_{R/k}$ of $k$-linear differential operators on $R$. When $R=k[x_1,\dots,x_n]$ (or when $R$ is a smooth $k$-algebra), $D_{R/k}$ is well-studied and has several nice properties; however, when $R$ is not a smooth $k$-algebra,  $D_{R/k}$ can be quite mysterious. For example, \cite{BGG} showed that if $R=\C[x,y,z]/(x^3+y^3+z^3)$, then $D_{R/\C}$ is not finitely generated over $\C$, not left- or right-Noetherian, and that $R$ is not a simple $D_{R/\C}$-module.

We consider the following questions:
\begin{enumerate}
\item If $\Spec R$ has rational singularities, when is $D_{R/k}$ simple? (\cite[Question~0.13.1]{LevStaf})
\item When is $R$ a simple $D_{R/k}$-module? (\cite[Question~0.13.3]{LevStaf})
\end{enumerate}

\begin{rem}
As we will discuss in Remark~\ref{implic}, if $D_{R/k}$ is simple then $R$ is a simple $D_{R/k}$-module, and if $R$ is a graded ring that is a simple $D_{R/k}$-module, then $D_{R/k}$ must contain differential operators of negative degree.
\end{rem}

In the setting of finite-type $\C$-algebras,
\cite[Question~5.1]{bigness} asked whether in (2) above it's sufficient for $R$ to have Gorenstein rational singularities.
This criteria was motivated in part by \cite[Theorem~2.2]{smith}, which showed that in characteristic $p$ an $F$-pure ring $R$ is a simple $D_R$-module if and only if $R$ is strongly $F$-regular; thus, one might expect a ``mildly'' singular ring $R$ in characteristic 0 to be a simple $D_R$-module. 

In this note, we give a negative answer to Hsiao's question, illustrating the differing behavior of differential operators in characteristic $p$ and characteristic $0$.

\begin{thm}
\label{notsuff}
There are  Gorenstein (graded) $\C$-algebras $R$ with rational singularities such that $D_{R/k}$ contains no differential operators of negative degree, and thus such that $R$ is not a simple $D_{R/k}$-module and $D_{R/k}$ is not simple.
One example is 
$R=\C[x,y,z,w]/(x^3+y^3+z^3+w^3)$.
\end{thm}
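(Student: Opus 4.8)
The plan is to use that $R$ is the homogeneous coordinate ring of the smooth cubic surface $X = V(x^3+y^3+z^3+w^3)\subseteq\mathbb{P}^3$. Since $X$ is a smooth hypersurface it is projectively normal, so $R = \bigoplus_{n\ge 0}H^0(X,\mathcal{O}_X(n))$ is the section ring of $(X,\mathcal{O}_X(1))$, and by adjunction $\omega_X\cong\mathcal{O}_X(-1)$, so $\mathcal{O}_X(1) = -K_X$ is ample and $X$ is a del Pezzo surface of degree $3$. As a hypersurface ring $R$ is Gorenstein, and it has rational singularities: its only singular point is the vertex, whose blow-up is the total space of a line bundle over $X$, so rationality amounts to $H^i(X,\mathcal{O}_X(n)) = 0$ for all $i>0$ and $n\ge 0$ — which holds since $X$ is a rational surface (for $n=0$) and by Kodaira vanishing applied to $\mathcal{O}_X(n) = K_X + (n+1)(-K_X)$ (for $n\ge 1$). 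By the Remark it then remains only to show that $D_R$ contains no differential operators of negative degree.

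For that I would pass to the smooth variety $X$ via the symbol calculus. Writing $\pi\colon\mathbb{P}(T_X)\to X$ and $H = \mathcal{O}_{\mathbb{P}(T_X)}(1)$, a differential operator on $R$ of degree $-j$ and order exactly $m$ has a nonzero symbol in $H^0(X,\operatorname{Sym}^m T_X\otimes\mathcal{O}_X(-j)) = H^0(\mathbb{P}(T_X),\,mH - j\pi^*\mathcal{O}_X(1))$; since an operator of degree $-j<0$ and order $0$ would lie in $R_{-j} = 0$, it suffices to prove
\[ H^0\bigl(X,\operatorname{Sym}^m T_X\otimes\mathcal{O}_X(-j)\bigr) = 0\qquad\text{for all }m\ge 1,\ j\ge 1 . \]
Moreover, a single nonzero such section, raised to the $N$th power (using that the section ring of a line bundle on the irreducible variety $\mathbb{P}(T_X)$ is a domain), would make $mH - j\pi^*\mathcal{O}_X(1)$ $\mathbb{Q}$-effective, and hence $H - t\,\pi^*\mathcal{O}_X(1)$ pseudoeffective for some rational $t>0$; so it is enough to show that no positive multiple of $\pi^*\mathcal{O}_X(1)$ can be subtracted from $H$ keeping it pseudoeffective. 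This refines the failure of bigness of $T_X$, which only says that $H$ lies outside the interior of the pseudoeffective cone of $\mathbb{P}(T_X)$.

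The geometric heart of the argument — and the step I expect to be the main obstacle — is establishing this vanishing for the cubic surface. Using $K_X = -\mathcal{O}_X(1)$ and $T_X\cong\Omega^1_X\otimes\mathcal{O}_X(1)$ (valid on a surface), the claim is equivalent to $H^0(X,\operatorname{Sym}^m\Omega^1_X\otimes\mathcal{O}_X(k)) = 0$ for all $m\ge 1$ and all $k\le m-1$, i.e.\ to $H^0(\mathbb{P}(T_X),\,mH - j\pi^*\mathcal{O}_X(1)) = 0$ for all $m,j\ge 1$. I would attack this through the explicit geometry of $X$: via the conormal and Euler sequences $0\to\mathcal{O}_X(-3)\to\Omega^1_{\mathbb{P}^3}|_X\to\Omega^1_X\to 0$ and $0\to\Omega^1_{\mathbb{P}^3}\to\mathcal{O}_{\mathbb{P}^3}(-1)^{\oplus 4}\to\mathcal{O}_{\mathbb{P}^3}\to 0$ together with their symmetric powers, which reduce matters to line-bundle cohomology on $\mathbb{P}^3$ (already in the order-one case the linear independence of $3x^2,3y^2,3z^2,3w^2$ in $H^0(X,\mathcal{O}_X(2))$ forces $H^0(X,T_X(-1)) = 0$), and/or via the description of $X$ as $\mathbb{P}^2$ blown up at six points and a study of the pseudoeffective cone of the threefold $\mathbb{P}(T_X)$ relative to $H$ and $\pi^*\mathcal{O}_X(1)$ — for instance by exhibiting a covering family of curves $C\subset\mathbb{P}(T_X)$ with $H\cdot C = 0$ and $\pi^*\mathcal{O}_X(1)\cdot C>0$, so that $(mH - j\pi^*\mathcal{O}_X(1))\cdot C<0$ and the desired sections cannot exist. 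Carrying out this cohomology/positivity computation on $\mathbb{P}(T_X)$ — equivalently, proving that the tangent bundle of the cubic surface fails to be big in the strong sense just described — is where essentially all the work lies; given it, the theorem follows.
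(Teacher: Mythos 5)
Your reduction is the right one in spirit, but as stated it has a technical flaw, and more importantly the actual content of the theorem is left undone. First, the flaw: the principal symbol of a homogeneous operator $\delta\in D_R$ of order exactly $m$ and degree $-j$ is a nonzero section not of $\Sym^m T_X\otimes \O_X(-j)$ but of $\Sym^m\sigma_1\otimes \O_X(-j)$, where $\sigma_1$ is the (nonsplit) Euler/Atiyah extension $0\to\O_X\to\sigma_1\to T_X\to 0$ obtained by pushing forward $T_U$ from the punctured cone $U$; the induced map to $\Sym^m T_X\otimes\O_X(-j)$ can kill the symbol (already powers of the Euler operator have order $m$ but vanishing ``horizontal'' symbol), so your one-line justification does not work. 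The statement you want -- that $H^0(X,\Sym^m T_X\otimes\O_X(-j))=0$ for all $m\geq 1$, $j\geq 1$ rules out all negative-degree operators -- is nevertheless true, but it is exactly Theorem~\ref{hsi} (Hsiao), proved by induction along the filtration of $\Sym^m\sigma_1$ with quotients $\Sym^i T_X$, using $H^0(X,L^{-j})=0$ and Kodaira vanishing $H^1(X,L^{-j})=0$; so this step is repairable by citing or reproducing that argument rather than the symbol claim.

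The genuine gap is that the key vanishing itself is never proved. You correctly translate it, via $T_X\cong\Omega_X(1)$, into $H^0(X,\Sym^m\Omega_X\otimes\O_X(k))=0$ for the relevant range of $k$, and you correctly identify the paper's actual tools (the conormal sequence $0\to\O_X(-3)\to\Omega_{\P^3}|_X\to\Omega_X\to 0$ and the Euler sequence, with symmetric powers, reducing everything to line-bundle cohomology on $\P^3$), but you stop there, explicitly deferring ``essentially all the work.'' Carrying it out requires precisely the computations of Lemmas~\ref{cohP} and \ref{rest}: the vanishings $H^0(\P^3,\Sym^m\Omega_{\P^3}(e))=0$ for $e\leq m$, $H^1(\P^3,\Sym^m\Omega_{\P^3}(e))=0$ for $e<m-1$, and the intermediate $H^i$ vanishings, followed by two restriction arguments to $X$. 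The delicate point is the borderline case $e=m$, which does not follow from the naive Euler-sequence bound and is handled in the paper by dualizing the $m$-th symmetric power of the twisted Euler sequence (valid only in characteristic $0$) and checking that the canonical map $\Sym^m(V^\vee)^\vee\to\Sym^{m-1}(V^\vee)^\vee\otimes V$ is injective. (In fact the paper proves the stronger statement $H^0(X,\Sym^m T_X)=0$, i.e.\ the case $k=m$, which subsumes what you need.) Your alternative suggestion -- finding a covering family of curves $C\subset\P(T_X)$ with $\O_{\P T_X}(1)\cdot C=0$ and $\pi^*\O_X(1)\cdot C>0$ -- is also only a proposal, not an argument. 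As it stands, the proof of the central vanishing, and hence of the theorem, is missing.
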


We also give a positive answer to the necessity of these conditions in the special case where $R$ is the homogeneous coordinate ring of a smooth projective variety embedded by a multiple of its canonical divisor:

\begin{thmm}[Theorem~\ref{partnec}]
Let $R$ be a 
normal $\Q$-Gorenstein graded $\C$-algebra, generated in degree 1, with an isolated singularity
If $R$ is a simple $D_R$-module, or merely admits a differential operator of negative degree,  then $R$ has klt singularities, and thus rational singularities.
\end{thmm}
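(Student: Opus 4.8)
The plan is to identify $\Spec R$ with the affine cone over $X:=\operatorname{Proj}R$ polarized by the very ample line bundle $L:=\mathcal O_X(1)$, and to convert a negative-degree differential operator into positivity of the tangent bundle $T_X$. By the Remark preceding the statement, it suffices to treat the case in which $D_R$ contains an operator $\delta$ of some negative degree $-d$, $d\geq 1$. Since $R$ is normal and (the regular case being trivial) of dimension $\geq 2$, it is $S_2$, so writing $Y^\circ:=\Spec R\setminus\{\mathfrak m\}$ we get $R_m=\Gamma(Y^\circ,\mathcal O_{Y^\circ})_m=H^0(X,\mathcal O_X(m))$ for every $m$; thus $R$ is the section ring of $(X,L)$, the map $\pi\colon Y^\circ\to X$ is the complement of the zero section in the total space of $\mathcal O_X(-1)$ and hence a $\mathbb G_m$-torsor, so $\pi$ is affine with $\pi_*\mathcal O_{Y^\circ}=\bigoplus_{m\in\mathbb Z}\mathcal O_X(m)$, and the isolated-singularity hypothesis makes $X$ and $Y^\circ$ smooth. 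The $\Q$-Gorenstein hypothesis says precisely that $K_X\sim_{\Q}rL$ for some $r\in\Q$, and by Kollár's criterion for cone singularities $\Spec R$ is klt if and only if $r<0$. Since klt singularities are rational, the theorem reduces to showing $r<0$, equivalently that $K_X$ is not pseudoeffective.

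I would extract the needed positivity from the principal symbol of $\delta$. Restricting $\delta$ to $Y^\circ$ produces a nonzero $\mathbb G_m$-semi-invariant differential operator of weight $-d$ and some order $k$; an order-$0$ operator of negative degree would be multiplication by an element of $R_{-d}=0$, so $k\geq 1$ and the symbol $\sigma\in\Gamma(Y^\circ,\operatorname{Sym}^kT_{Y^\circ})_{-d}$ is nonzero. The Euler sequence $0\to\mathcal O_{Y^\circ}\to T_{Y^\circ}\to\pi^*T_X\to 0$, whose sub-line-bundle is spanned by the nowhere-vanishing weight-$0$ Euler vector field, induces a decreasing filtration on $\operatorname{Sym}^kT_{Y^\circ}$ with successive quotients $\pi^*\operatorname{Sym}^jT_X$, $j=k,k-1,\dots,0$, the smallest piece being $\pi^*\operatorname{Sym}^0T_X=\mathcal O_{Y^\circ}$. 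Taking weight-$(-d)$ global sections, the bottom piece contributes $\Gamma(Y^\circ,\mathcal O_{Y^\circ})_{-d}=R_{-d}=0$, so $\sigma$ must have nonzero image in the weight-$(-d)$ sections of $\pi^*\operatorname{Sym}^jT_X$ for some $j\geq 1$, which by the formula for $\pi_*\mathcal O_{Y^\circ}$ equals $H^0(X,\operatorname{Sym}^jT_X\otimes\mathcal O_X(-d))$. Hence for some $j\geq 1$ and $d\geq 1$ the ample line bundle $\mathcal O_X(d)$ is a subsheaf of $\operatorname{Sym}^jT_X$.

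The crux is then to deduce that $K_X$ is not pseudoeffective. Suppose it were. By Miyaoka's generic-semipositivity theorem, for $m\gg 0$ a general curve $C=H_1\cap\dots\cap H_{n-1}$ ($n=\dim X$, $H_i\in|\mathcal O_X(m)|$) is smooth with $\Omega_X^1|_C$ nef; then $\operatorname{Sym}^j\Omega_X^1|_C$ is nef, so its dual $\operatorname{Sym}^jT_X|_C$ has no sub-line-bundle of positive degree. But restricting $\mathcal O_X(d)\hookrightarrow\operatorname{Sym}^jT_X$ to a general such $C$, where the section stays nonzero and is therefore injective, yields a sub-line-bundle $\mathcal O_C(d)\hookrightarrow\operatorname{Sym}^jT_X|_C$ of degree $d\,m^{n-1}(L^n)>0$, a contradiction. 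Thus $K_X$ is not pseudoeffective; combined with $K_X\sim_{\Q}rL$ and the ampleness of $L$ this forces $r<0$, which completes the proof.

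The first two steps are essentially bookkeeping about cones and symbols; the expected main obstacle is the third, where non-pseudoeffectivity of $K_X$ genuinely relies on Miyaoka's theorem on generic semipositivity of the cotangent bundle of a non-uniruled variety — equivalently, on the fact that a smooth projective variety, some symmetric power of whose tangent bundle contains an ample subsheaf, must be uniruled. I would also want to pin down the exact form of Kollár's cone-singularity criterion being invoked. As a consistency check, the argument applies verbatim when $\dim X=1$ and then predicts that $R=\C[x,y,z]/(x^3+y^3+z^3)$, the cone over an elliptic curve — Gorenstein with a non-rational isolated singularity — admits no differential operator of negative degree, hence (via the Remark) is not a simple $D_R$-module, in accordance with \cite{BGG}.
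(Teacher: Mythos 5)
Your proposal is correct and takes essentially the same route as the paper: the paper converts a negative-degree operator into a nonzero section of $\Sym^m T_X\otimes L^{-e}$ via Hsiao's theorem (whose proof is the same Euler-sequence and order-filtration bookkeeping you perform upstairs on the punctured cone with principal symbols), then applies Miyaoka's generic nefness and restriction to a general complete intersection curve to force uniruledness, and concludes Fano and klt via Koll\'ar's lemma, exactly as you do. The only cosmetic differences are that you bypass the formal intermediate notion of bigness and phrase the final step as non-pseudoeffectivity of $K_X$ rather than uniruledness plus $H^0(X,mK_X)=0$.
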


Both main theorems are proved by working with the smooth complex variety $X=\Proj R$, and using the following observation of \cite{bigness}:

\begin{thm}[{{\cite[Theorem~1.2]{bigness}}}]
Let  $X$ be a smooth complex projective variety and $L$ an ample line bundle. Set $R=S(X,L)=\bigoplus_m H^0(X,L^m)$.
If $R$ is a simple $D_R$-module then $T_X$ is~big.
\end{thm}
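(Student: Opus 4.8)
The plan is to extract from the $D$-module hypothesis a nonzero global section of a twisted symmetric power of $T_X$, and then to run a positivity computation on the projectivization $\P(T_X)$. I may assume $\dim X\ge1$, the case $\dim X=0$ being trivial. Since $R=\bigoplus_{m\ge0}H^0(X,L^m)$ is a graded ring with $R_0=\C$ which is simple as a $D_R$-module, the Remark above furnishes a nonzero homogeneous operator $\delta\in D_R$ of negative degree, say $\deg\delta=-j$ with $j\ge1$; as operators of order $0$ are multiplications by elements of $R$ and so have degree $\ge0$, the operator $\delta$ has order $n\ge1$.

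Next I would transport $\delta$ to $X=\Proj R$. Because $X$ is smooth and $L$ is ample, $R$ is a normal domain of dimension $\dim X+1\ge2$, and the punctured cone $U:=\Spec R\setminus\{\m\}$ — the complement of the zero section in the total space of $L^{-1}$ — is smooth with $\Gamma(U,\mathcal O_U)=R$; hence $\delta$ restricts to a $\C^\times$-homogeneous differential operator on $U$, of weight $-j$ and order $n$, whose principal symbol is a nonzero section $\sigma(\delta)\in\Gamma(U,\operatorname{Sym}^nT_U)$ of weight $-j$. Using the Euler sequence $0\to\mathcal O_U\to T_U\to q^*T_X\to0$ for the $\C^\times$-torsor $q\colon U\to X$, together with the induced filtration on $\operatorname{Sym}^nT_U$ whose graded pieces are $q^*\operatorname{Sym}^iT_X$ for $0\le i\le n$, the section $\sigma(\delta)$ has nonzero image in some graded piece $q^*\operatorname{Sym}^kT_X$; it cannot lie entirely in the bottom (weight-zero) Euler piece, since it has weight $-j\ne0$, so $k\ge1$. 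Extracting the weight-$(-j)$ part under $q_*$ then yields a nonzero section
\[
0\ne\sigma\in H^0\bigl(X,\operatorname{Sym}^kT_X\otimes L^{-j}\bigr),\qquad k\ge1,\ j\ge1.
\]

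Finally I would deduce that $T_X$ is big. Let $p\colon\P(T_X)\to X$ be the projective bundle, in the convention for which $p_*\mathcal O_{\P T_X}(\ell)=\operatorname{Sym}^\ell T_X$ for $\ell\ge0$, so that $T_X$ is big precisely when $\mathcal O_{\P T_X}(1)$ is a big line bundle — equivalently, when any positive tensor power of it is. By the projection formula, $\sigma$ gives a nonzero section of $\mathcal O_{\P T_X}(k)\otimes p^*L^{-j}$, hence an effective divisor $D\sim\mathcal O_{\P T_X}(k)\otimes p^*L^{-j}$. Since $\mathcal O_{\P T_X}(1)$ is ample relative to $p$, the line bundle $A:=\mathcal O_{\P T_X}(1)\otimes p^*L^{c}$ is ample for $c\gg0$. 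Now for $t\in\Q\cap(0,1)$ one has the equality of $\Q$-divisor classes
\[
\mathcal O_{\P T_X}(k)=tD+(1-t)\,kA+p^*\!\bigl((tj-(1-t)kc)\,L\bigr),
\]
and choosing $t\in\bigl[\tfrac{kc}{\,j+kc\,},1\bigr)$ — a nonempty interval because $j\ge1$ — makes $tj-(1-t)kc\ge0$, so the last term is nef, while $tD$ is effective and $(1-t)kA$ is ample. Thus $\mathcal O_{\P T_X}(k)$ is the sum of an ample class and an effective class, hence big; therefore $\mathcal O_{\P T_X}(1)$ is big, i.e.\ $T_X$ is big.

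The step I expect to be the main obstacle is the second one: making rigorous the dictionary between differential operators on the abstract graded ring $R$ and sections of twisted symmetric powers of $T_X$, and in particular verifying that the principal symbol of $\delta$ genuinely survives the passage to $X$ — that is, that the integer $k$ above is at least $1$, rather than $\sigma(\delta)$ being concentrated in the ``vertical'' Euler direction. The third step is essentially formal positivity; the one point to be respected there is that $p^*L$ is only nef, never big, on $\P(T_X)$, so one genuinely needs the \emph{negative} twist $L^{-j}$ in order to produce an effective divisor capable of absorbing it.
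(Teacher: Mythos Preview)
Your argument is correct and lands in the same place as the paper's, but the route is genuinely a little different. The paper (following Hsiao) argues by contrapositive: assuming $H^0(X,\Sym^m T_X\otimes L^e)=0$ for all $m$ and all $e<0$, it works on $X$ with the sheaves $\sigma_m=\Sym^m\sigma_1$ (where $0\to\O_X\to\sigma_1\to T_X\to0$) and proves by induction on the order, using Kodaira vanishing $H^1(X,L^e)=0$ in the base case, that $(D_R)_e=0$ for all $e<0$. You instead work directly on the punctured cone $U$: from a single operator $\delta$ of degree $-j$ you take the principal symbol in $\Sym^n T_U$ and descend through the Euler filtration to produce one nonzero class in $H^0(X,\Sym^k T_X\otimes L^{-j})$; your final step is then exactly the ``if'' direction of the paper's Proposition on bigness (Kodaira's lemma is the paper's tool there, your $\Q$-divisor decomposition is an equivalent formulation). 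Your approach is a bit more economical---it needs only the single section, not the full vanishing-for-all-$e$ statement---and it avoids Kodaira vanishing entirely, so it does not require the hypothesis $\dim X\ge2$ that the paper's restated Theorem carries.

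One point to tighten: your justification that $k\ge1$ is not quite the right reason. You say $\sigma(\delta)$ ``cannot lie entirely in the bottom (weight-zero) Euler piece, since it has weight $-j\ne0$.'' But the bottom piece $F_0\cong\O_U$, generated by $I^n$, admits global sections of \emph{every} weight: a weight-$(-j)$ section is $f\cdot I^n$ with $f\in\Gamma(U,\O_U)_{-j}=H^0(X,L^{-j})$. The actual obstruction is that $H^0(X,L^{-j})=0$ because $L$ is ample and $j\ge1$; the nonvanishing of the weight alone is not enough. With that correction your descent through the filtration is clean: take $k$ minimal with $\sigma(\delta)\in H^0(U,F_k)_{-j}$, note $k\ge1$ since $H^0(U,F_0)_{-j}=0$, and the image in $H^0(U,F_k/F_{k-1})_{-j}=H^0(X,\Sym^k T_X\otimes L^{-j})$ is nonzero.
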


In Section~\ref{bigvec} we discuss the notion of bigness of vector bundles; in the context of this theorem, bigness of $T_X$ is equivalent to,
 for any $e>0$,
 the existence of a nonzero global section of $H^0(\Sym^m T_X\otimes L^{-e})$ for some $m\gg0$.

The singularities of rings of the form
$R=S(X,L)=\bigoplus_m H^0(X,L^m)$ (i.e., rings that are section rings of polarized smooth complex projective varieties) translate to properties of the embedding $X\hookrightarrow \P^N$ determined by $H^0(X,L^m)$ for some $m$ large enough:
\begin{enumerate}
\item $R$ is always normal.
\item $R$ is Gorenstein if and only if $L =\O_X(aK_X)$ for some $a\in \Z$, and $\Q$-Gorenstein if and only if $L^{\otimes b} =\O_X(aK _X)$ for $a,b\in \Z$.
\item $R$ is klt if and only if it's $\Q$-Gorenstein and $a<0$, i.e., if and only if it's $\Q$-Gorenstein and $-K_X$ is ample.
\end{enumerate}

If $R$ is klt then it has rational singularities, and the converse is true if $R$ is Gorenstein. Thus, to give a counterexample to the sufficiency of Gorenstein rational singularities for $D_R$-simplicity, we find a variety $X$ with $-K_X$ ample (i.e., $X$ is Fano) and $T_X$ not big. 
Thus, Theorem~\ref{notsuff} will follow from

\begin{thm}[Theorem~\ref{main}]
Let $X$ be a del Pezzo surface of degree 3, i.e., a smooth cubic surface.
$T_X$ is not big; in fact, $H^0(X,\Sym^m T_X)=0$ for all $m$.
\end{thm}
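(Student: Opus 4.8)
The plan is to realize $X$ as a smooth cubic surface $X=\{F=0\}\subset\P^3$ and to translate the vanishing of $H^0(X,\Sym^m T_X)$ into an injectivity statement for an explicit differential operator built from the cubic form $F$.

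\emph{Reduction via the normal bundle sequence.} Since $\O_X(1)=-K_X$ and $N_{X/\P^3}=\O_X(3)$, the normal bundle sequence is $0\to T_X\to T_{\P^3}|_X\xrightarrow{dF}\O_X(3)\to 0$. Because the quotient is a line bundle, taking $\Sym^m$ produces, in characteristic $0$, a short exact sequence
\[
0\to\Sym^m T_X\to(\Sym^m T_{\P^3})|_X\xrightarrow{\;\iota_{dF}\;}(\Sym^{m-1}T_{\P^3}\otimes\O(3))|_X\to 0 ,
\]
where $\iota_{dF}$ is contraction with $dF\in H^0(\P^3,\Omega^1_{\P^3}(3))$ (surjectivity of $\iota_{dF}$ holds because in a local splitting it acts on $\Sym^{m-j}T_X\otimes\O(3)^{j}$ by the scalar $j$). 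Taking global sections identifies $H^0(X,\Sym^m T_X)$ with the kernel of $\iota_{dF}\colon H^0\!\big(X,(\Sym^m T_{\P^3})|_X\big)\to H^0\!\big(X,(\Sym^{m-1}T_{\P^3}(3))|_X\big)$, so it is enough to prove this map injective.

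\emph{A polynomial model.} The Euler sequence on $\P^3$ gives $0\to\Sym^{k-1}\C^4\otimes\O(k-1+j)\to\Sym^k\C^4\otimes\O(k+j)\to\Sym^kT_{\P^3}(j)\to 0$; since $H^1$ and $H^2$ of line bundles on $\P^3$ vanish, this forces $H^1(\P^3,\Sym^kT_{\P^3}(j))=H^2(\P^3,\Sym^kT_{\P^3}(j))=0$ for all $k,j$. Feeding this into $0\to\Sym^kT_{\P^3}(j-3)\xrightarrow{F}\Sym^kT_{\P^3}(j)\to(\Sym^kT_{\P^3}(j))|_X\to 0$ identifies $H^0(X,(\Sym^k T_{\P^3}(j))|_X)$ with $H^0(\P^3,\Sym^kT_{\P^3}(j))/F\cdot H^0(\P^3,\Sym^kT_{\P^3}(j-3))$. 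Using the description of $\bigoplus_m H^0(\P^3,\Sym^m T_{\P^3})$ coming from the Euler sequence — a section of $\Sym^m T_{\P^3}$ is a polynomial $P(x_0,\dots,x_3;\xi_0,\dots,\xi_3)$ of bidegree $(m,m)$ modulo the radial relation $G:=\sum_i x_i\xi_i$, and $\iota_{dF}$ becomes $P\mapsto\sum_i(\partial_i F)\,\partial_{\xi_i}P$ — the whole question reduces to: in the bigraded complete intersection ring $A:=\C[x_0,\dots,x_3,\xi_0,\dots,\xi_3]/(F,\,G)$, the derivation $D:=\sum_i(\partial_i F)\,\partial_{\xi_i}$ is injective on each piece $A_{(m,m)}$.

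\emph{Injectivity via smoothness of $F$, and the obstacle.} That $D$ descends to $A$ is the Euler identity $D(G)=\sum_i x_i\partial_iF=3F$ together with $D(F)=0$. To show $D$ kills nothing beyond $(F,G)$, the essential input is that $X$ is \emph{smooth}, i.e. $\partial_0F,\dots,\partial_3F$ is a regular sequence in $\C[x]$; I would feed this and its Koszul complex into an analysis of $D$ as a map of the free $\C[x]$-modules $\C[x][\xi]/(G)$, bookkeeping the $\xi$-degree. The base case $m=1$ is exactly the classical fact that a smooth cubic surface has no infinitesimal automorphisms: $A_{(1,1)}\cong\mathfrak{gl}_4/\C\cdot\mathrm{Id}=\mathfrak{pgl}_4$, and $D([v])=0$ reads $v(F)\in\C F$, which for smooth $F$ forces $v\in\C\cdot\mathrm{Id}$. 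The main obstacle is carrying this through for all $m$ — controlling $\ker D$ in all bidegrees $(m,m)$ — which I expect to come down to acyclicity of the Koszul complex on $(\partial_0F,\dots,\partial_3F)$ and how it interacts with the variables $\xi_i$ and the relation $G$. As a numerical cross-check one can argue separately that $X$ is rational, so $H^0(X,\Sym^m\Omega^1_X)=0$ and hence (twisting by the effective class $-K_X$) $H^0(X,\Sym^m\Omega^1_X\otimes\omega_X)=0$, so $H^2(X,\Sym^m T_X)=0$ by Serre duality; combined with $\chi(X,\Sym^m T_X)=-(m+1)(m^2+2m-1)$ (from $c_1(T_X)=-K_X$, $K_X^2=3$, $c_2(T_X)=9$), the theorem is equivalent to the exact evaluation $h^1(X,\Sym^m T_X)=(m+1)(m^2+2m-1)$, which the computation above (it gives $H^1(X,\Sym^m T_X)=\mathrm{coker}\,\iota_{dF}$) also produces.
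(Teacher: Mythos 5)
Your reduction is set up correctly, but the proof has a genuine gap at exactly the point where the theorem's content lies. Via the normal bundle sequence $0\to T_X\to T_{\P^3}|_X\to\O_X(3)\to 0$ and the characteristic-$0$ symmetric-power sequence, you correctly identify $H^0(X,\Sym^m T_X)$ with the kernel of contraction with $dF$ on $H^0(X,(\Sym^m T_{\P^3})|_X)$, and after the Euler-sequence bookkeeping you reduce to the statement that $D=\sum_i(\partial_iF)\,\partial_{\xi_i}$ is injective on the bidegree-$(m,m)$ pieces of $A=\C[x,\xi]/(F,G)$. But you only prove this for $m=1$ (absence of infinitesimal automorphisms) and then state that you ``expect'' the general case to follow from acyclicity of the Koszul complex on $\partial_0F,\dots,\partial_3F$ and its interaction with the $\xi$-variables and $G$. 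That expected argument is never carried out, and it is not a routine verification: controlling $\ker D$ in all bidegrees is precisely the nontrivial assertion equivalent to the theorem for $m\geq 2$, and nothing in your sketch shows how the regular-sequence property of the partials propagates through the quotient by $G$ in higher $\xi$-degree. As written, the proposal proves only the classical $m=1$ case and reformulates, rather than proves, the rest. (A smaller inaccuracy: the blanket claim $H^2(\P^3,\Sym^kT_{\P^3}(j))=0$ for all $k,j$ fails for sufficiently negative $j$, since $H^3$ of line bundles on $\P^3$ does not vanish there; in the twists you actually use this is harmless, but it should be stated with the correct range. The numerical ``cross-check'' via $\chi$ and Serre duality only shows the statement is equivalent to an exact value of $h^1$, so it adds no proof.)

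It is worth contrasting this with the route the paper takes, which avoids your obstacle entirely: instead of working with $T_X$ inside $T_{\P^3}|_X$ and analyzing the kernel of an explicit operator, one uses $T_X\cong\Omega_X(-K_X)=\Omega_X(1)$, so $\Sym^mT_X=\Sym^m\Omega_X(m)$, and then the conormal sequence $0\to\O_X(-3)\to\Omega_{\P^3}|_X\to\Omega_X\to 0$, whose sub (not quotient) is a line bundle. Taking symmetric powers puts $\Sym^m\Omega_X(m)$ as a quotient of $\Sym^m\Omega_{\P^3}|_X(m)$ with kernel $\Sym^{m-1}\Omega_{\P^3}|_X(m-3)$, so the desired vanishing follows from $H^0$ and $H^1$ vanishing for $\Sym^m\Omega_{\P^3}(e)$ on $\P^3$ in the relevant twists, all of which come from the Euler sequence; the only delicate point is $H^0(\P^3,\Sym^m\Omega_{\P^3}(m))=0$, handled by dualizing and observing that a canonical multiplication map of symmetric powers is surjective. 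In other words, the paper's arrangement makes every required input an explicit cohomology vanishing on projective space, whereas your arrangement demands injectivity of a differential operator on sections over $X$ for every $m$ --- the step you have not supplied. If you want to salvage your approach, that commutative-algebra injectivity statement is exactly what you must prove.
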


\begin{rem}
After the first version of this paper was made public, the author realized that Theorem~\ref{main} also follows from \cite[Theorem~B]{BO}. The proof we give is distinct, and more direct but less general; see Section~\ref{new} for a discussion of their results.
\end{rem}

Similarly, to address the necessity if $R=S(X,L)$ is the homogeneous coordinate ring of a smooth projective variety embedded by a power of its canonical divisor (i.e., such that either $K_X$ or $-K_X$ is ample), we show that $T_X$ big implies that $-K_X$ ample.
In fact, we note the following statement (likely known to experts), which immediately implies our statement on necessity:

\begin{prop}
Let $X$ be a smooth complex projective variety.
If $T_X$ is big, then $X$ is uniruled.
\end{prop}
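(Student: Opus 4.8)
The plan is to argue by contrapositive: if $X$ is not uniruled, I want to show $T_X$ cannot be big. The natural tool is the theorem of Boucksom–Demailly–Paun–Peternell (BDPP): a smooth projective variety $X$ is uniruled if and only if $K_X$ is \emph{not} pseudoeffective. So I would assume $X$ is not uniruled, giving that $K_X$ is pseudoeffective, and aim to deduce that every symmetric power $\Sym^m T_X$ has no sections twisted by a small ample, i.e.\ that $T_X$ is not big in the sense recalled in the excerpt.

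The key step is to relate sections of $\Sym^m T_X$ (twisted down by an ample) to the positivity of $K_X$. Concretely, a nonzero section of $\Sym^m T_X \otimes L^{-e}$ for $L$ ample, $e>0$, produces a nonzero section of $\Sym^m T_X$, hence a nonzero map $\Sym^m \Omega_X^1 \to \O_X$, equivalently a nonzero element of $H^0(X, \Sym^m T_X)$; dually one gets an inclusion of line bundles or, more usefully, one can pass to the projectivization $\pi\colon \P(\Omega_X^1)\to X$ (Grothendieck convention, so that $\pi_*\O(1) = T_X$) and observe that bigness of $T_X$ means the tautological class $\O_{\P(\Omega_X^1)}(1)$ is big on the $(2\dim X - 1)$-fold $\P(\Omega_X^1)$. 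Then I would use that the restriction of $K_{\P(\Omega_X^1)}$ to a general fiber, together with the relative structure, lets one compare $\O(1)$ with $\pi^* K_X$: a standard computation gives $K_{\P(\Omega^1_X)} = -n\,\O(1) + \pi^*(K_X + \text{something})$ type relations, so that bigness of $\O(1)$ forces, after intersecting against curves moving in families, that $K_X$ fails to be pseudoeffective. Rather than reprove BDPP I would cite it and also cite the more elementary route: bigness of $T_X$ implies there exists $m$ with $H^0(X,\Sym^m T_X \otimes L^{-1})\ne 0$; a section of $\Sym^m T_X$ vanishing on an ample divisor gives, via the correspondence between symmetric tensors and "foliated" rational curves or via a direct argument, a covering family of rational curves — this is essentially Miyaoka–Mori / the Miyaoka uniruledness criterion's dual form.

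The cleanest writeup, and the one I would actually carry out, is: assume $T_X$ big. Since $T_X$ is a quotient of $H^0(X,\Sym^m T_X\otimes A^{-1})\otimes A$-type data for suitable ample $A$ and large $m$, one gets that some twist down of a symmetric power of $T_X$ is effective, which by the BDPP characterization (applied on $\P(\Omega^1_X)$, or via the well-known consequence that bigness of a quotient of $\Sym^m T_X$ forces $-K_X$ to be "large" along a covering family) implies $K_X$ is not pseudoeffective, hence $X$ is uniruled. I expect the main obstacle to be pinning down the precise positivity comparison on $\P(\Omega^1_X)$ — making rigorous the passage from "$\O(1)$ big" to "$K_X$ not pseudoeffective" without circularity — since one must be careful that bigness of $\O_{\P(\Omega^1_X)}(1)$ genuinely contradicts pseudoeffectivity of $\pi^*K_X$; the resolution is that if $\pi^*K_X$ were pseudoeffective it would be a limit of effective classes, and pairing with the movable curve classes coming from the big class $\O(1)$ (which dominate $X$) would give a contradiction with BDPP's duality between the pseudoeffective cone and the movable cone. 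So in the end I would simply invoke BDPP together with this movable-curves pairing, keeping the argument to a few lines and flagging that a characteristic-$p$ or Mori-theoretic proof via covering families of rational curves is also available.
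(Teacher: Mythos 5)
There is a genuine gap in the route you say you would actually carry out. Your plan is to assume $K_X$ pseudoeffective (via BDPP) and contradict bigness of the tautological class by a ``positivity comparison'' on the projectivized bundle, but the canonical bundle formula you hope to exploit contains no information about $K_X$: with the quotient convention, on $Y=\P T_X$ (the space whose tautological class $\xi=c_1(\O_{\P T_X}(1))$ computes bigness of $T_X$) one has $K_Y=\pi^*(K_X+\det T_X)-n\,\xi=-n\,\xi$, since $\det T_X=-K_X$ cancels the $K_X$ term. So bigness of $\xi$ only says $-K_Y$ is big, i.e.\ $Y$ is uniruled --- which is vacuous, because a projective bundle over any $X$ is uniruled by its fibers. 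Likewise, the proposed contradiction ``pair $\pi^*K_X$ with movable curve classes coming from the big class $\O(1)$'' does not close: a pseudoeffective class pairs nonnegatively with every movable class, so you would need to exhibit a movable class pairing \emph{strictly negatively} with $\pi^*K_X$, and bigness of $\xi$ by itself produces no such class (there is no numerical relation converting $\xi$-positivity into $\pi^*K_X$-negativity). The step where you pass from a section of $\Sym^m T_X\otimes L^{-e}$ to a mere section of $\Sym^m T_X$ also discards the essential point: abelian varieties have $H^0(X,\Sym^m T_X)\neq 0$ for all $m$ while $K_X$ is trivial, so only the strict twist down by an ample can force non-pseudoeffectivity of $K_X$.

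What actually makes the argument work --- and is the paper's proof --- is Miyaoka's generic nefness theorem, which you gesture at in passing but do not develop: if $X$ is not uniruled, then $\Omega_X\vert_C$ is nef for a general complete intersection curve $C$. Given a nonzero section of $\Sym^m T_X\otimes L^{-1}$, restrict to such a $C$ not contained in its zero locus to get an injection $L\vert_C\hookrightarrow \Sym^m T_X\vert_C$; saturating the image gives a sub-line bundle of positive degree, hence a quotient line bundle of $\Sym^m\Omega_X\vert_C$ of negative degree, contradicting nefness of $\Sym^m\Omega_X\vert_C$. In other words, the positivity input on $\Omega_X$ along movable curves has to come from Miyaoka (or an equivalent generic semipositivity statement); the BDPP characterization of uniruledness alone, applied on $\P T_X$, cannot supply it. If you replace your ``cleanest writeup'' with the Miyaoka--Mori/generic-nefness route you mention as an aside, you recover the paper's argument.
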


We begin by discussing differential operators and $D$-simplicity in Section~\ref{diffops}.
We then recall the definition and properties of big vector bundles in Section~\ref{bigvec}, and discuss the connection between $D$-simplicity and bigness of the tangent bundle in Section~\ref{tangentbundle}.
In Section~\ref{delpezzos}, we show that a Fano variety  need not have big tangent bundle, by examining the tangent bundles of some del Pezzo surfaces. We show that if $X$ is a del Pezzo surface of degree $\leq 4$, then $T_X$ is not big (and thus in particular the tangent bundle of a smooth cubic surface is not big).
Section~\ref{new} discusses how the results discussed in Section~\ref{delpezzos} also follows from work of \cite{BO,OL}.
In Section~\ref{conv}, we prove Theorem~\ref{partnec} by showing that, if $R=S(X,L)$ for some smooth complex projective $X$ with $L$ very ample and a multiple of $K_X$, then $D_R$-simplicity of $R$ forces $X$ to be Fano and thus $R$ to have klt (thus rational) singularities.
Sections~\ref{poschar} and \ref{zerochar} contain no new results, but
compare and contrast our results with known results in positive characteristic and characteristic 0 respectiveley.

\begin{rem}
We note that after the initial draft of this note was made public, there has been additional progress via work of \cite{HLS}: they completely classify which del Pezzo surfaces have big (or pseudoeffective) tangent bundle via different methods, and in particular recover the two examples we treat here (the del Pezzos of degrees 3 and 4). In addition, they are able to  treat also the case of a hypersurface in $\P^n$, as well as certain del Pezzo threefolds.
For further discussion, see Remark~\ref{otherpaper}.
\end{rem}
\subsection*{Acknowledgements}
I'm grateful to Anurag Singh for introducing us to this problem, and for helpful conversations.
This work also benefited from useful discussions with Jack Jeffries, Monica Lewis, Eamon Quinlan-Gallego,
and Mel Hochster.
I'm grateful as well to Jen-Chieh Hsiao
for helpful correspondence and his paper \cite{bigness} which provided the key tools for this note.
I would also like to thank Feng Shao for pointing out an error in a previous version of the paper.
Finally, I thank my advisor 
Mircea Musta\c{t}\u{a}
for many helpful conversations and suggestions over the course of this project.

\section{Differential operators and singularities}
\label{diffops}

We begin with a brief discussion of differential operators on rings.
Let $A$ be a commutative noetherian ring and
$R$ a commutative $A$-algebra. 
The (noncommutative) ring $D_{R/A}$ of $A$-linear differential operators on $R$  is defined as follows: let $D_{R/A}^0=\Hom_R(R,R)=R$, where for $r\in R$ we write $\overline r $ for the multiplication by $r$ map $R\to R$.  Define inductively
$$
D_{R/A}^i :=\set{\delta \in \Hom_A(R,R): [\overline r,\delta]\in D_{R/A}^{i-1} },
$$
and set
$$
D_{R/A}=\bigcup_i D_{R/A}^i.
$$
We note $D_{R/A}$ is a subring of $ \Hom_{A}(R,R)$ and thus $R$ carries a canonical $D_{R/A}$-module structure.

\begin{exa}
If $k$ is a field and $R=k[x_0,\dots,x_n]$, then 
$$
D_{R/k}=R\Bigl\<\frac{1}{\a_0!\cdots \a_n!}
\Bigl(\frac{\d }{\d x_0}\Bigr)^{\a_0}
\cdots
\Bigl(\frac{\d}{\d x_n}\Bigr)^{\a_n}: \a_i\in \N
 \Bigr\>
$$
is the familiar ring generated by the partial differential operators (the coefficients are irrelevant if $\Char k=0$, but necessary when $\Char k>0$).
\end{exa}

\begin{rem}
When $R=\bigoplus_d R_d$ is a graded ring, one can define homogeneous differential operators of degree $e$: one writes 
$$
(D_{R/k})_e = \set{\delta\in D_{R/k}: \delta(R_d)\subset R_{d+e} \text{ for all } d }.
$$
Then $D_{R/k}$ is itself a graded ring.
\end{rem}

For the rest of the paper, $A$ will be a field $k$, most often $\C$, and $R$ a finitely generated $k$-algebra. We will write simply $D_R$ for $D_{R/k}$.

When $R$ is not a smooth $k$-algebra, the behavior of $D_R$ can be quite subtle, even when $R$ is a ``nice'' ring (e.g., a complete intersection). For example, we have the following example:

\begin{exa}[\cite{BGG}]
Let $R=\C[x,y,z]/(x^3+y^3+z^3)$ be the affine cone over a smooth elliptic curve. Then
$D_R$ has no differential operators of negative degree, $D_R$ is not a finitely generated $\C$-algebra, and is neither left- nor right-Noetherian.
We note here that since $D_R$ has no differential operators of negative degree, the maximal homogeneous ideal $(x,y,z)$ is a proper sub-$D_R$-module of~$R$.
\end{exa}

There are a variety of ways to use properties of $D_R$ to describe the singularities of the ring $R$: one can consider noetherianity of $D_R$, finite generation, generation by derivations, freeness of the $R$-module $D^1_R$, and more (see, for example, \cite{LevStaf,smith,smithvdB,nakaiconj}).
In particular, \cite{LevStaf} posed the following questions:

\begin{enumerate}
\item If $\Spec R$ has rational singularities, when is $D_R$ simple? (\cite[Question~0.13.1]{LevStaf})
\item When is $R$ a simple $D_R$-module? (\cite[Question~0.13.3]{LevStaf})
\end{enumerate}

We recall the definition of rational and klt singularities:
\begin{dfn}
A $\C$-algebra $R$ has \emph{rational singularities} if for some resolution of singularities $f:Y\to \Spec R$ we have $f_*\O_Y=\O_{\Spec R}$ and $R^if_*\O_Y=0$ for $i>0$.
\end{dfn}

\begin{rem}
\label{propsings}
We will also use the notion of \emph{klt singularities} of $R$ (by which we mean of $\Spec R$), which are of importance in the minimal model program. We omit this definition here, 
as we will not make any use of the definition (nor that of rational singularities) in this note. (We do note that klt singularities are by definition $\Q$-Gorenstein.)
We assemble here the only facts we will need:
\begin{itemize}
\item If $R$ has klt singularities then it has rational singularities \cite{Elkik}.
\item If $R$ is Gorenstein and has rational singularities then it has klt singularities \cite[Proposition~3.1]{SchwedeTakagi}.
\item Let $X$ be a smooth projective variety and $L$ a very ample line bundle. Then $S(X,L):=\bigoplus H^0(X,L^{\otimes m})$ is $\Q$-Gorenstein if and only if $L^{\otimes b}\cong \O_X(aK_X)$ for some $a,b\in \Z$, and has rational singularities if and only if $-K_X$ is ample \cite[Lemma~3.1]{kollar}.
\end{itemize}
\end{rem}

The property in (2) above is called $D$-simplicity:

\begin{dfn}
A $k$-algebra
$R$ is called $D$-simple if $R$ is a simple $D_R$-module.
\end{dfn}

For clarity, we will sometimes say $R$ is $D_R$-simple.

\begin{rem}
\label{implic}
We note the following implications:
\begin{itemize}
\item 
If $D_R$ is a simple ring, then $R$ is $D$-simple: to see this, if $R$ is not $D$-simple, let $I$ be a proper nonzero ideal of $R$ such that $D_R I \subset I$;  one can then check that $\set{\delta \in D_R: \delta R \subset I}$ is a nonzero proper ideal of $D_R$. The converse is not true (see, e.g, \cite[0.13]{LevStaf}).
\item 
If $R$ is a graded ring, and $R$ is $D$-simple, then $D_R$ must have differential operators of negative degree, i.e., $(D_R)_e\neq0 $ for some $e<0$.
\end{itemize}
\end{rem}

\begin{rem}
$D$-simplicity is known to impose certain conditions on $R$. If $R$ is $D$-simple then:
\begin{itemize}
\item If $R$ is reduced then $R$ must be a domain.
\item $R$ is Cohen--Macaulay \cite[Theorem~6.2.5]{vdB}.
\item If $k$ is a perfect field of positive characteristic, and $R$ is $F$-pure, then $R$ is $D$-simple if and only if $R$ is strongly $F$-regular \cite[Theorem~2.2]{smith}.
\end{itemize}
The following classes of rings  are known to be $D$-simple:
\begin{itemize}
\item If $T$ is $D_T$-simple, and the inclusion of an $T$-submodule $R\hookrightarrow T$ splits as a map of $R$-modules, then $R$ is $D_R$-simple \cite[Proposition~3.1]{smith}.
Thus, for example, toric rings are $D$-simple.
\item Invariant subrings of polynomial rings under the action of classical algebraic groups \cite{LevStaf}.
\end{itemize}
\end{rem}

The examples listed above, as well as analogies between strong $F$-regularity in characteristic~$p$ and klt singularities in characteristic 0, motivate the following more specific formulation:

\begin{quest}[{{\cite[Question~5.1]{bigness}}}]
\label{mainquest}
If $R$ is a finitely generated Gorenstein $\C$-algebra such that $\Spec R$ has rational singularities, is $R$ then $D$-simple?
\end{quest}

We note that since $R$ is assumed to be Gorenstein it's equivalent to ask whether $\Spec R$ has klt singularities.

\begin{rem}
\cite{LevStaf} gives an example of a ring with rational singularities which is not $D$-simple; the ring in question is obtained as the quotient of $\C[x,y,z]/(x^3+y^3+z^3)$ under a $\Z/3\Z$-action, which is a 2-dimensional normal isolated rational singularity. This example is why one must impose the Gorenstein condition in the phrasing of \cite[Question~5.1]{bigness}.
\end{rem}

Our Theorem~\ref{notsuff}
exhibits
a ``mildly'' singular ring $R$  which not only is not $D$-simple (and thus such that $D_R$ is itself not a  simple ring), but which does not have any differential operators of ngegative degree. This indicates
that one must impose fairly strong conditions on $R$ to obtain a sufficient condition for Question~0.13.1 of \cite{LevStaf} on the simplicity of $D_R$.

\section{Positivity of vector bundles}
\label{bigvec}
The rest of the paper will use various notions of positivity for vector bundles. We recall some definitions and properties here, largely following \cite[Chapter~6]{LazarsfeldII}.

Let $X$ be any variety and $E$ a locally free sheaf of rank $r$ on $X$. We write $\pi:\P E\to X$ for the projective bundle of 1-dimensional quotients of $E$. $\P E$ carries a tautological line bundle $\O_{\P E}(1)$, and we have that $\pi_* \O_{\P E}(m)=\Sym ^m E$ for $m\geq 0$.

\begin{dfn}
 $E$ is said to be ample, nef or big if the line bundle $\O_{\P E}(1)$ is ample, nef, or big respectively.
\end{dfn}

\begin{rem}
There are conflicting conventions for defining bigness of vector bundles. The definiton we take here is elsewhere called ``L-big'' (for ``Lazarsfeld-big''). There is also the stronger notion of ``V-big'' (for ``Viehwig-big'').
These differ even in quite simple cases: for example, $\O_{\P^1}\oplus \O_{\P^1}(1)$ is L-big but not V-big. For a detailed discussion of the different notions of positivity generally and bigness specifically, see \cite{posbund,jabbusch}.
\end{rem}

\begin{rem}
A line bundle $L$ on a variety $X$ of dimension $N$ is big if and only if 
$$\lim_{m\to \infty} \frac{h^0(X,L^{\otimes m})}{m^N}>0.$$ 
We can give a similar characterization for bigness of vector bundles:

Say $E$ is a rank-$r$ vector bundle on a variety $X$ of dimension $n$.
Because $\pi_*\O_{\P E}(1)=\Sym^m E$, we have that
$$
H^0(\P E,\O_{\P E}(m))=H^0(X,\Sym^m E).
$$
Since $\P E$ has dimension $n+r-1$, we have that $E$ is big if and only if
$$
\lim_{m\to \infty} \frac{h^0(X,\Sym^m E) }{m^{n+r-1}} >0.
$$
\end{rem}

The following characterization of bigness will be crucial in Section~\ref{tangentbundle}:

\begin{prop}[\cite{bigness}]
\label{bigsimple}
Let $L$ be an ample line bundle on $X$.
$E$ is big if and only if for all $e>0$ there exists 
$m\geq 0$ such that
$H^0(X,\Sym^m E\otimes L^{-e})\neq 0$.
\end{prop}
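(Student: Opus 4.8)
\emph{The plan.} I would work on the projective bundle $\pi\colon\P E\to X$, where by definition $E$ is big if and only if $\O_{\P E}(1)$ is big; here $X$, and hence $\P E$, is taken to be projective (as in all the applications), so the usual theory of big line bundles applies. Two standard facts will be used. First, the projection formula together with $\pi_*\O_{\P E}(m)=\Sym^mE$ gives, for every line bundle $N$ on $X$ and every $m\ge 0$, the identity $H^0\bigl(\P E,\O_{\P E}(m)\otimes\pi^*N\bigr)=H^0\bigl(X,\Sym^mE\otimes N\bigr)$, which is the dictionary translating the asserted condition into a statement about sections on $\P E$. Second, since $\O_{\P E}(1)$ is $\pi$-ample and $L$ is ample on $X$, the line bundle $\O_{\P E}(1)\otimes\pi^*L^{\otimes e}$ is ample on $\P E$ for all $e\gg 0$ (a relatively ample line bundle becomes ample after twisting by the pullback of a sufficiently positive line bundle). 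I would also use that bigness of a line bundle is unchanged under passing to a positive tensor power, so $\O_{\P E}(1)$ is big iff $\O_{\P E}(m)$ is big for some $m>0$.

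For the direction ``section condition $\Rightarrow E$ big'', I would fix $e_0\gg 0$ so that $A:=\O_{\P E}(1)\otimes\pi^*L^{\otimes e_0}$ is ample, and then apply the hypothesis at $e=e_0$: there is an $m\ge 0$ with $H^0(X,\Sym^mE\otimes L^{-e_0})\ne 0$, equivalently with $\O_{\P E}(m)\otimes\pi^*L^{-e_0}$ linearly equivalent to an effective divisor. Since
$$
\O_{\P E}(m+1)\;=\;A\otimes\bigl(\O_{\P E}(m)\otimes\pi^*L^{-e_0}\bigr),
$$
this exhibits $\O_{\P E}(m+1)$ as the tensor product of an ample line bundle with one having a nonzero section, hence big; therefore $\O_{\P E}(1)$, and so $E$, is big.

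For the converse, assume $\O_{\P E}(1)$ is big and fix $e>0$. I would invoke Kodaira's Lemma on the projective variety $\P E$: for the big class $\O_{\P E}(1)$ and the divisor $\pi^*(eL)$ --- written, if one wants the textbook form that takes an effective divisor, as a difference of two very ample (hence effective) divisors --- there exists $m>0$ with $H^0\bigl(\P E,\O_{\P E}(m)\otimes\pi^*L^{-e}\bigr)\ne 0$. By the projection-formula identity above, this is precisely $H^0(X,\Sym^mE\otimes L^{-e})\ne 0$, as required.

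Once the two imported facts are in hand the argument is essentially bookkeeping; the only point needing a little care is that, for a \emph{given} $e>0$, one may have to twist by a strictly larger power $e_0>e$ of $L$ before $\O_{\P E}(1)\otimes\pi^*L^{\otimes e_0}$ is ample, which is why the $\Leftarrow$ direction applies the hypothesis at $e_0$ rather than at $e$. The main substantive ingredient is Kodaira's Lemma for big line bundles (used to absorb the negative pullback twist $\pi^*L^{-e}$), with the relative-ampleness statement as a secondary input; both are standard (cf. \cite{LazarsfeldII}).
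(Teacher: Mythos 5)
Your proposal is correct and takes essentially the same route as the paper's proof: translate via $\pi_*\O_{\P E}(m)=\Sym^m E$, get bigness by writing a twist of $\O_{\P E}(1)$ as an ample line bundle tensor an effective one, and use Kodaira's lemma for the converse. The only cosmetic differences are that you apply the hypothesis at a single large $e_0$ where the paper instead takes $N$-th powers of the effective class $\O_{\P E}(m)\otimes\pi^*L^{-e}$, and that you handle the possibly non-effective twist $\pi^*(eL)$ by writing it as a difference of very ample divisors.
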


We reproduce the proof of \cite{bigness} here for convenience (note that just the ``only if'' implication is stated there, although the converse direction is straightforward).

\begin{proof}
Say $H^0(X,\Sym ^m E\otimes L^{-e})=H^0(\O_{\P E}(m)\otimes \pi^* L^{-e}) \neq 0$, i.e., $\O_{\P E}(m) \otimes \pi^*L^{-e}$  is effective (and thus so are all its positive tensor powers).
Since $\O_{\P E}(1)$ is $\pi$-ample, we have that $\O_{\P E}(1)\otimes \pi^* L^j$ is ample for $j\gg0$; choosing $j=N e$ for $N\gg0$, we have
$$
\O_{\P E}(mN+1)= \underbrace{(\O_{\P E}(1)\otimes \pi^* L^{Ne})}_{\text{ample}}\otimes \underbrace{(\O_{\P E}(mN)\otimes L^{-Ne})}_{\text{effective}}
$$
Thus, we have that $\O_{\P E}(mN+1)$ can be decomposed as the product of an ample line bundle and an effective line bundle, and is thus  big; hence $\O_{\P E}(1)$ is big as well.

Conversely,
if $E$ is big, then Kodaira's lemma (Lemma~\ref{kodaira} below) implies that for any $e$ such that $L^e$ is effective, there exists $m$ such that $\O_{\P E}(m)\otimes \pi^*L^{-e}$ has a section for some $m\gg0$. Then 
$$
0\neq H^0(\P E,\O_{P E}(m)\otimes \pi^*L^{-e})=H^0(X,\Sym^m E \otimes L^{-e}),
$$
concluding the proof.
\end{proof}

\begin{lem}[Kodaira's lemma]
\label{kodaira}
Let $Y$ be a normal variety,
$A$ a big divisor and $D$ an effective divisor.
Then 
$$
H^0(Y,\O_Y(mA-D))\neq 0
$$
for all $m$ sufficiently large and divisible.
\end{lem}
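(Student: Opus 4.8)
The plan is a section-counting argument. Since $A$ is big, $h^0(Y,\O_Y(mA))$ grows like $m^{n}$ with $n=\dim Y$, whereas twisting down by the effective divisor $D$ can cost only $O(m^{n-1})$ sections because $\operatorname{Supp}(D)$ has dimension $n-1$; as $m^{n}$ dominates $m^{n-1}$, for $m$ large enough a section of $\O_Y(mA-D)$ must survive. First I would reduce to $Y$ projective (bigness presupposes that $h^0$ is finite, and the lemma is only ever applied on projective varieties) and set $n=\dim Y$.

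The input I would use is the exact sequence
$$
0\longrightarrow \O_Y(mA-D)\longrightarrow \O_Y(mA)\longrightarrow \mathcal Q_m\longrightarrow 0,
$$
in which the first arrow is the inclusion of sheaves of rational functions determined by $D\ge 0$, and the cokernel $\mathcal Q_m$ is supported on $\operatorname{Supp}(D)$. Passing to cohomology gives
$$
h^0\!\bigl(Y,\O_Y(mA-D)\bigr)\ \ge\ h^0\!\bigl(Y,\O_Y(mA)\bigr)-h^0\!\bigl(Y,\mathcal Q_m\bigr).
$$
By the characterization of bigness recalled above, there is a constant $c>0$ with $h^0(Y,\O_Y(mA))\ge c\,m^{n}$ for all $m$ sufficiently large and divisible, so it remains only to bound $h^0(Y,\mathcal Q_m)$ by a constant times $m^{n-1}$.

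For that bound, the cleanest route is to reduce to the case that $D$ is an effective Cartier divisor --- replacing $D$ by a larger Cartier divisor with the same support only shrinks $H^0(Y,\O_Y(mA-D))$, hence is harmless --- whereupon $\mathcal Q_m\cong\O_Y(mA)|_D\cong\bigl(\O_Y(A)|_D\bigr)^{\otimes m}$ is the $m$-th tensor power of a fixed line bundle on the projective scheme $D$ of dimension $n-1$, so $h^0(D,\mathcal Q_m)=O(m^{n-1})$ by the standard polynomial bound on sections of powers of a line bundle on a $d$-dimensional projective scheme. Combining the estimates yields $h^0(Y,\O_Y(mA-D))\ge c\,m^{n}-O(m^{n-1})>0$ for $m$ large and divisible, which is the assertion.

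I expect the only genuine obstacle to be the uniform-in-$m$ estimate $h^0(Y,\mathcal Q_m)=O(m^{n-1})$, together with the small amount of bookkeeping needed to pass from an arbitrary effective Weil divisor $D$ to a Cartier one with the same support; the rest is formal. One can also avoid the Cartier reduction entirely by estimating $h^0(Y,\mathcal Q_m)$ directly from the facts that $\mathcal Q_m$ is a coherent quotient of $\O_Y(mA)$ and is supported on a fixed closed subset of dimension $\le n-1$.
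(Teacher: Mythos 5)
Your proof is correct, and it is essentially the paper's own argument in the only sense available: the paper does not prove this lemma but defers to \cite[Proposition~2.2.6]{Lazarsfeld}, whose proof is exactly your section count via the exact sequence $0\to \O_Y(mA-D)\to \O_Y(mA)\to \mathcal{Q}_m\to 0$, with $h^0(Y,\O_Y(mA))$ growing like $m^n$ by bigness while $h^0$ of the quotient supported on $D$ is $O(m^{n-1})$. The one caveat is that identifying $\mathcal{Q}_m$ with the $m$-th tensor power of a fixed line bundle on $D$ tacitly uses that $A$ is Cartier (or, using the ``divisible'' in the statement, $\Q$-Cartier), which is the intended reading here and in the paper's application, where $A$ is the class of $\O_{\P E}(1)$ on the projective bundle.
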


For a proof, see \cite[Proposition~2.2.6]{Lazarsfeld}.

Finally, we note here a fact we will use throughout:

\begin{lem}
\label{sessym}
Let $k$ be a field and
$X$ a $k$-scheme, and
let 
$$
0\to L\to E\to F\to 0
$$
be a short exact sequence of vector bundles, with $L$ a line bundle.
Then for any $m>0$ we have a short exact sequence
$$
0\to L\otimes \Sym^{m-1}E\to \Sym^m E \to \Sym^m F\to 0.
$$
If $k$ has characteristic 0, and we have a short exact sequence of vector bundles
$$
0\to E\to F\to L\to 0
$$
with $L$ again a line bundle, then for any $m>0$ we have a short exact sequence
$$
0\to \Sym^m E \to \Sym^{m}F \to \Sym^{m-1} F \otimes L\to 0.
$$
\end{lem}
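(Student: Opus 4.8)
The plan is to reduce both statements to the local situation, where every short exact sequence of vector bundles splits, and then observe that the maps appearing in the two asserted sequences are all \emph{canonical} (functoriality of $\Sym^m$, multiplication in the symmetric algebra, and comultiplication), so that — exactness being a local condition — the global sequences are exact as soon as the local computation with a free module checks out.

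For the first sequence, the map $\Sym^m E\to\Sym^m F$ is $\Sym^m$ applied to $E\to F$, and $L\otimes\Sym^{m-1}E\to\Sym^m E$ is the multiplication map, using the inclusion $L\hookrightarrow E=\Sym^1E$. Working locally on $X$, where $F$ is free, the sequence $0\to L\to E\to F\to 0$ splits and $E\cong L\oplus F$, so
$$
\Sym^m E\;\cong\;\bigoplus_{i=0}^{m}L^{\otimes i}\otimes\Sym^{m-i}F .
$$
Under this decomposition the map to $\Sym^m F$ is projection onto the $i=0$ summand, its kernel is $\bigoplus_{i\ge 1}L^{\otimes i}\otimes\Sym^{m-i}F$, and factoring out one copy of $L$ identifies this with $L\otimes\Sym^{m-1}(L\oplus F)=L\otimes\Sym^{m-1}E$; one checks directly that this identification is the multiplication map. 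No hypothesis on the characteristic enters here.

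For the second sequence I would first construct the right-hand map globally as the composite
$$
\Sym^m F\;\longrightarrow\;\Sym^{m-1}F\otimes F\;\xrightarrow{\ \mathrm{id}\otimes(F\to L)\ }\;\Sym^{m-1}F\otimes L,
$$
where the first arrow is the canonical comultiplication $f_1\cdots f_m\mapsto\sum_i f_1\cdots\widehat{f_i}\cdots f_m\otimes f_i$, while $\Sym^m E\to\Sym^m F$ is $\Sym^m$ of $E\hookrightarrow F$. Again I check exactness locally, where $F\cong E\oplus L$; if $t$ is a local generator of $L$, the summand $\Sym^{m-j}E\otimes L^{\otimes j}$ of $\Sym^m F$ is carried by $s\,t^{\,j}\mapsto j\,(s\,t^{\,j-1})\otimes t$ (the terms that remove a factor lying in $E$ die after $F\to L$), so the kernel is precisely the $j=0$ summand $\Sym^m E$ and the image is all of $\bigoplus_{j\ge 1}\Sym^{m-j}E\otimes L^{\otimes j}\cong\Sym^{m-1}F\otimes L$ — provided $1,2,\dots,m$ are invertible. (Alternatively one dualizes $0\to E\to F\to L\to 0$ to $0\to L^\vee\to F^\vee\to E^\vee\to 0$, applies the first part, and dualizes back, invoking the characteristic-$0$ identification $(\Sym^m V)^\vee\cong\Sym^m(V^\vee)$.)

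The one genuine subtlety, and the place the characteristic hypothesis is unavoidable, is exactly the surjectivity in the second sequence: in characteristic $p\le m$ the comultiplication lands in a proper subsheaf (equivalently, $\Sym^m$ of an inclusion of bundles need not have the expected associated graded), and the statement really does fail. Everything else is a routine local calculation with symmetric powers of a free module, and globalization is automatic because all the maps in sight are canonical.
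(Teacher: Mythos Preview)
Your argument is correct. The first part is essentially the content of the reference the paper cites (Eisenbud, Proposition~A2.2): pass to a local splitting, decompose $\Sym^m(L\oplus F)$, and identify the kernel of the projection to $\Sym^m F$ with $L\otimes\Sym^{m-1}E$ via the multiplication map. Nothing to add there.

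For the second part your primary route differs from the paper's. The paper dualizes $0\to E\to F\to L\to 0$ to put the line bundle on the left, applies the first statement to $0\to L^\vee\to F^\vee\to E^\vee\to 0$, and then dualizes back, invoking the characteristic-$0$ isomorphism $(\Sym^m V^\vee)^\vee\cong\Sym^m V$; this is exactly the alternative you mention parenthetically. Your main argument instead builds the map $\Sym^m F\to\Sym^{m-1}F\otimes L$ directly from the comultiplication and checks exactness locally, where the factor $j$ appearing in $s\,t^{\,j}\mapsto j\,(s\,t^{\,j-1})\otimes t$ makes the role of the characteristic completely transparent. The dualization approach is slicker but hides the characteristic dependence inside the identification $(\Sym^m V^\vee)^\vee\cong\Sym^m V$; your approach is more hands-on and pinpoints exactly which integers need to be invertible. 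Either is fine.
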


For the first fact see \cite[Proposition~A2.2]{eisenbud}; the second follows dualizing $0\to E\to F\to L\to 0$, applying the first fact, and then dualizing again, and using the identifications $(\Sym E^\vee)^\vee \cong \Sym^m E$, which holds only in characteristic $0$.

\section{$D$-simplicity of section rings and bigness of the tangent bundle}
\label{tangentbundle}

In this section, we recall \cite[Theorem~1.2]{bigness}:

\begin{thm}
\label{hsi}
Let $X$ be a smooth complex projective variety of dimension $\geq 2$, let $L$ be an ample line bundle on $X$, and let $R=S(X,L):=\bigoplus H^0(X,\O_X(mL))$ be the section ring of $X$ with respect to $L$.
If $R$ has a differential operator of negative degree (e.g., if $R$ is $D_R$-simple) then $T_X$ is big.
\end{thm}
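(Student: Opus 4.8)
\medskip

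The plan is to extract from a differential operator of negative degree a nonzero section of a symmetric power of $T_X$ twisted down by a power of $L$, then to amplify this to obtain such sections for \emph{every} negative twist, at which point Proposition~\ref{bigsimple} finishes the argument. The one delicate point, which dictates the whole strategy, is that over the \emph{singular} ring $R$ the symbol map on differential operators need not be injective, so one cannot simply read a section of $\Sym^n T$ directly off of $\delta$; the remedy is to work on the smooth punctured cone and to track the $\mathbb{G}_m$-weight carefully so that the correct power of $L$ appears.

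First I would move to the smooth part of the cone. Since $\dim X\geq 2$ we have $\dim R=\dim X+1\geq 3$, so the vertex $v\in\Spec R$ has codimension $\geq 3$; as $R$ is a normal domain this gives $\Gamma(U,\O_U)=R$ for $U:=\Spec R\setminus\{v\}$. Because $X$ is smooth and $L$ is ample, $\Spec R$ has an isolated singularity at $v$, so $U$ is smooth, and the projection $p\colon U\to X$ exhibits $U$ as the $\mathbb{G}_m$-bundle attached to $L$ (the complement of the zero section in the total space of $L^{-1}$), with $p_*\O_U=\bigoplus_{m\in\mathbb{Z}}L^{m}$, the weight-$m$ piece being $L^m$, so that $R_m=\Gamma(U,\O_U)_{\mathrm{wt}\,m}=H^0(X,L^m)$. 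Now take a nonzero $\delta\in(D_R)_{-e}$ with $e>0$; it has some finite order $n$, and $n\geq 1$ because an order-$0$ operator is multiplication by an element of $R$, necessarily of nonnegative degree. Localizing, $\delta$ restricts to a $\mathbb{G}_m$-equivariant differential operator $\delta|_U$ on the smooth variety $U$ of weight $-e$, still nonzero and of order exactly $n$, since the identification $\Gamma(U,\O_U)=R$ makes $\Gamma(U,\mathcal{D}^{j}_U)$ acting on $R$ coincide with $D^{j}_R$ for every $j$.

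Next I would read off the symbol. On the smooth $U$ the principal symbol yields a nonzero $\mathbb{G}_m$-homogeneous section $\sigma_n(\delta|_U)\in\Gamma(U,\Sym^n T_U)$ of weight $-e$. The relative tangent sequence of $p$ is $0\to\O_U\to T_U\to p^*T_X\to 0$, with $\O_U=T_{U/X}$ trivialized by the Euler vector field, which is $\mathbb{G}_m$-invariant (weight $0$); hence $\Sym^n T_U$ carries a $\mathbb{G}_m$-equivariant filtration with graded pieces $p^*\Sym^{j}T_X$ for $0\leq j\leq n$, without weight shift. Passing to weight-$(-e)$ global sections and applying the projection formula, the subquotients of the induced filtration on $\Gamma(U,\Sym^n T_U)_{\mathrm{wt}\,-e}$ inject into $H^0(X,\Sym^{j}T_X\otimes L^{-e})$. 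Since $\sigma_n(\delta|_U)\neq 0$, at least one of these groups is nonzero; the value $j=0$ is impossible because $H^0(X,L^{-e})=0$ for the ample $L$ and $e>0$, so we conclude $H^0(X,\Sym^{m}T_X\otimes L^{-e})\neq 0$ for some $m\geq 1$.

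Finally I would amplify and invoke Proposition~\ref{bigsimple}. Such a nonzero section is a nonzero $s\in H^0(\P T_X,\O_{\P T_X}(m)\otimes\pi^*L^{-e})$, and since $\P T_X$ is integral all its powers $s^{k}\in H^0(\P T_X,\O_{\P T_X}(km)\otimes\pi^*L^{-ke})$ are nonzero. Given any $e'>0$, choose $k\gg 0$ with $ke\geq e'$ and $L^{ke-e'}$ effective (possible since $L$ is ample, so $L^{j}$ has sections for $j\gg0$), and multiply $s^{k}$ by the pullback of a nonzero section of $L^{ke-e'}$: this gives a nonzero element of $H^0(\P T_X,\O_{\P T_X}(km)\otimes\pi^*L^{-e'})=H^0(X,\Sym^{km}T_X\otimes L^{-e'})$. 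As $e'>0$ was arbitrary, Proposition~\ref{bigsimple} yields that $T_X$ is big. Everything after the extraction of $\sigma_n(\delta|_U)$ is formal, so the main obstacle is genuinely the first part: making the symbol of a differential operator on the singular ring $R$ descend to an honest global section on $X$ with the right twist, which is exactly what the passage to the smooth punctured cone $U$ and the weight bookkeeping accomplish.
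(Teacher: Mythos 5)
Your proposal is correct, and its engine is the same as the paper's (which reproduces Hsiao's argument): pass to the smooth punctured cone $U$, identify homogeneous differential operators on $R$ with $\mathbb{G}_m$-weight pieces of $\Gamma(U,\Diff^m_U)$, and use the (relative) Euler sequence $0\to\O_U\to T_U\to p^*T_X\to 0$ together with $H^0(X,L^{-e})=0$ to transfer the information to $\Sym^j T_X\otimes L^{-e}$. The packaging differs in a pleasant way: the paper argues contrapositively, inducting on the order of operators through the filtration $\Delta^{m-1}_e\subset\Delta^m_e$ with graded pieces $\sigma_m\otimes L^e$ and the sequences $0\to\sigma_{m-1}\to\sigma_m\to\Sym^m T_X\to 0$, whereas you take the principal symbol of the given operator of exact order $n$ (nonzero precisely because $D^{n-1}_R=\Gamma(U,\Diff^{n-1}_U)$) and filter $\Sym^n T_U$ by the weight-zero Euler line, reading off a nonzero class in some $H^0(X,\Sym^j T_X\otimes L^{-e})$ with $j\geq 1$ directly. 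This direct route dispenses with the induction and with Kodaira vanishing (which the paper invokes but does not really need for this implication), and your final amplification by powers of the section plus sections of $L^{ke-e'}$ is a fine way to meet the ``for all $e'>0$'' form of Proposition~\ref{bigsimple} (whose proof in fact shows a single nonvanishing twist already suffices). The weight bookkeeping you do is exactly right and is the only delicate point; the argument is sound.
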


We recall the proof from \cite{bigness} for the reader's convenience:

\begin{proof}
We first recall the connection between the tangent bundle of a smooth variety and the differential operators on its section ring from \cite{nakaiconj}.

Let $X$ be a smooth projective variety and $L$ an ample line bundle. We will assume for simplicity here that $R=S(X,L)$ is generated in degree 1. \cite{bigness} and \cite{nakaiconj} treat the general case; we note for our purposes here that we can also replace $R$ by a Veronese subring while preserving the existence of a differential operator of negative degree
(and thus reduce to the case here) and assume that $\Proj R$ is embedded in some $\P^n$ by $|L|$.

By Proposition~\ref{bigsimple}, $T_X$  is big if and only if for any $e<0$ there exists $m>0$ such that $H^0(X,\Sym^mT_X \otimes L^e)\neq 0$.
We claim  the vanishing $H^0(X,\Sym^mT_X \otimes L^e)=0$ for all $e<0$ and all $m$ implies on the other hand that $D^m_e:=(D^m_R)_e=0$ for all $e<0$ and all $m$, i.e., that $R$ has no differential operators of negative degree. This then shows that $R$ cannot be $D$-simple as the homogeneous maximal ideal will be a proper $D_R$-submodule.

Write $D^m$ for the differential operators on $R$ of order $\leq m$; write $D^m_l\subset D^m$ for the homogeneous differential operators of degree $l$.
Let $\widehat X=\Spec R$ be the affine cone over $X$, which embeds naturally in $\A^{n+1}=\Spec k[x_0,\dots,x_{n}]$, and let $U=\widehat X -\set{\mathfrak m}$ be the punctured cone, so
$\pi: U\to X$ is an $\A^1$-bundle.

  Let 
$$
I=
\sum_{i=0}^{n+1}  x_i \frac{\d}{\d x_i} \in D^1_0
$$
be the Euler operator on $R$ induced from that on 
$k[x_0,\dots,x_{n}]$.
Write $\Diff^m $ for the sheaf of differential operators of order $\leq m$ on $U$. Note that by reflexivity of $D^m$ we have that
$$
D^m = H^0(U,\Diff^m).
$$
Thus
$I$ gives a global section of $\Diff^1$, and let $\Diff^m_e\subset \Diff^m$ be the  subsheaf of differential operators $\delta$ with $[I,\delta]= e \delta$. The global sections of $\Diff^m_e$ are exactly those homogeneous differential operators~$\delta$ on $R$ such that for any homogeneous polynomial $f$ we have
$$
\deg \delta(f)-\deg f =e.
$$

For any $m,e$,
write $\Delta_e^m = \pi_*(\Diff^m_e)$;  note then that $$H^0(X,\Delta_e^m)=H^0(U,\Diff^m_e)=D^m_e.$$ One can then check:
\begin{enumerate}
\item $\Delta^m_e=\Delta^m_0\otimes L^e$ for any $m\geq 1$ and any $e\in \Z$.
\item Let $\sigma _1 =\Delta^1_0$ and let $\sigma_m =\Delta_0^m /\Delta_0^{m-1}$ for $m\geq 2$. Then 
$\sigma_m=\Sym^m \sigma_1$ and
we have a short exact sequence
\begin{equation}
\label{euler}
0\to \O_X \to \sigma_1 \to T_X\to 0,
\end{equation}
and thus by Lemma~\ref{sessym}
short exact sequences
\begin{equation}
\label{symeuler}
0\to \sigma_{m-1} \to \sigma_m \to \Sym^m T_X\to 0.
\end{equation}
\end{enumerate}

Now, let $e<0$.
Twisting \eqref{euler} by $L^e$ and taking global sections we get
$$
0\to H^0(X,L^e)\to H^0(X,\sigma_1\otimes L^e)\to H^0(X,T_X\otimes L^e)\to H^1(X,L^{e}).
$$
By Kodaira vanishing, $H^1(X,L^{e})=0$, while clearly $H^0(X,L^e)=0$.
Thus, if $H^0(X,T_X\otimes L^e)=0$, then 
$H^0(X,\sigma_1\otimes L^e)=0$. Moreover, since by definition $\sigma_1\otimes L^e=\pi_*(\Diff ^1_e)$, we have that 
$$
0=H^0(X,\sigma_1\otimes L^e)=H^0(U,\Diff^1_e)=D^1_e
$$
for $e<0$. 
Thus, if $H^0(X,T_X\otimes L^e)=0$ for $e<0$, then $R$ has no derivations of negative degree.

Now, we handle the higher order differential operators by induction on $m$.
Again, let $e<0$.
Twisting \eqref{symeuler} by $L^e$ and 
taking global sections we get
$$
0\to H^0(X,\sigma_{m-1}\otimes L^e) \to H^0(X,\sigma_m\otimes L^e) \to H^0(X,\Sym^m T_X\otimes L^e)
$$
Vanishing of $H^0(X,\Sym^m T_X\otimes L^e)$ for all $m$ and all $e<0$ will imply that $H^0(X,\sigma_m\otimes L^e)=H^0(X,\sigma_{m-1}\otimes L^e)$ for all $m$ and all $e<0$, and we've seen already that $H^0(X,\sigma_1\otimes L^e)=0$ for $e<0$, and thus we obtain $H^0(X,\sigma_m\otimes L^e)=0$ for all $m$ and all $e<0$.

By definition we have
$$
0\to \Delta^{m-1}_e\to \Delta^{m}_e \to \sigma_m \otimes L^e\to 0 
$$
and thus
$$
0\to \underbrace{H^0(X,\Delta^{m-1}_e)}_{D^{m-1}_e} \to \underbrace{H^0(X,\Delta^{m}_e)}_{D^{m}_e} \to H^0(X,\sigma_m \otimes L^e).
$$
Since the rightmost term vanishes,
we know that $D^m_e=D^{m-1}_e$ for all $e<0$, and we know already that $D^1_e=0$ for $e<0$, and thus the result is shown.
\end{proof}


\section{The tangent bundle of degree-3 del Pezzo surfaces}
\label{delpezzos}

In this section, we will treat the case of del Pezzo surfaces of degree 3, and show that their tangent bundles are not big.  In the next section, we will treat those of degree 4.
By \cite[Corollary~1.3]{bigness}, toric del Pezzo surfaces (i.e., those of degree $\geq 6$) have big tangent bundles, while combining the results of this section and the next implies those of degree $\leq 4$ do not have big tangent bundles (see Corollary~\ref{classific}).

The del Pezzo surfaces of degrees 3, because they embed as surfaces in $\P^3$.  If one attempts to use the resulting short exact sequences for their tangent bundles, however, one runs into difficulties; instead, the key is to study the cotangent bundles, via the following elementary fact:

\begin{lem}
\label{tangcotang}
For any smooth surface $Y$,
$T_Y\cong \Omega_Y(-K_Y)$.
\end{lem}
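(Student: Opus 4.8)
The plan is to deduce this from the general fact that a rank-$2$ locally free sheaf $E$ on any scheme is canonically isomorphic to $E^\vee\otimes\det E$, and then to apply it to $E=\Omega_Y$.

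First I would recall why a rank-$2$ bundle $E$ satisfies $E^\vee\cong E\otimes(\det E)^{-1}$. The wedge product defines a morphism
$$
E\longrightarrow \Hom_{\O_Y}(E,\wedge^2 E),\qquad v\longmapsto (w\mapsto v\wedge w),
$$
and since $\wedge^2 E=\det E$ is a line bundle the target is $E^\vee\otimes\det E$. This map is an isomorphism, which can be checked locally, where $E$ is free with basis $e_1,e_2$: it sends $e_1\mapsto e_2^\vee\otimes(e_1\wedge e_2)$ and $e_2\mapsto -e_1^\vee\otimes(e_1\wedge e_2)$, which is manifestly invertible. Dualizing (or twisting by $(\det E)^{-1}$) gives $E^\vee\cong E\otimes(\det E)^{-1}$.

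Next I would apply this to $E=\Omega_Y$, which is locally free of rank $2$ because $Y$ is a smooth surface. By definition of the canonical divisor on a surface, $\det\Omega_Y=\wedge^2\Omega_Y=\O_Y(K_Y)$. Hence
$$
T_Y=\Omega_Y^\vee\cong \Omega_Y\otimes(\det\Omega_Y)^{-1}=\Omega_Y\otimes\O_Y(-K_Y)=\Omega_Y(-K_Y),
$$
as claimed.

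I do not expect a genuine obstacle here: the statement is elementary and valid over any base. The only point requiring a moment's care is the verification that the wedge-product pairing on a rank-$2$ bundle is perfect, which is exactly the one-line local computation above; the rest is formal manipulation of line bundles.
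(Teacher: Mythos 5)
Your proof is correct and is essentially the paper's argument: the wedge pairing $\Omega_Y\times\Omega_Y\to\wedge^2\Omega_Y=\O_Y(K_Y)$ is exactly the nondegenerate pairing the paper invokes, and you simply spell out the local verification that it is perfect before twisting by $\O_Y(-K_Y)$. No gaps; the extra detail is fine but not a different route.
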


\begin{proof}
The nondegenerate pairing $\Omega_Y\times \Omega_Y \to \O_Y(K_Y)$ induces an isomorphism $(\Omega_Y)^\vee \cong \Omega_Y(-K_Y)$, and $(\Omega_Y)^\vee$ is simply $T_Y$.
\end{proof}

For the rest of this section we work over an arbitrary ground field of characteristic 0.

We begin by treating the case of del Pezzo surfaces of degree 3, i.e.,  smooth cubic surfaces. While our results for degree-4 del Pezzos  actually imply the results for those of degree-3, the argument is simpler in the degree-3 case, and the statement actually slightly stronger.
We will prove:

\begin{thm}\label{main}
Let $X$ be a del Pezzo surface of degree 3, i.e., a smooth cubic surface.
$T_X$ is not big; in fact, $H^0(X,\Sym^m T_X)=0$ for all $m$.
\end{thm}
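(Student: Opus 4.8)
The plan is to use Lemma~\ref{tangcotang}, which says $T_X \cong \Omega_X(-K_X)$, so that $\Sym^m T_X \cong \Sym^m \Omega_X \otimes \O_X(-mK_X)$, and hence $H^0(X, \Sym^m T_X) = H^0(X, \Sym^m \Omega_X(-mK_X))$. Since $X \subset \P^3$ is a smooth cubic surface, we have $-K_X = \O_X(1)$ (the hyperplane class, by adjunction). So the goal becomes showing $H^0(X, \Sym^m \Omega_X(m)) = 0$ for all $m \geq 1$ (the case $m = 0$ being trivial). First I would write down the conormal/cotangent exact sequence for $X \subset \P^3$:
\[
0 \to \O_X(-3) \to \Omega_{\P^3}|_X \to \Omega_X \to 0,
\]
using that the conormal bundle of a degree-3 hypersurface is $\O_X(-3)$.

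Next I would feed this into the first short exact sequence of Lemma~\ref{sessym}: there is no line subbundle appearing, so instead I would dualize to get $0 \to T_X \to T_{\P^3}|_X \to \O_X(3) \to 0$, which has $\O_X(3)$ as a line bundle quotient; then Lemma~\ref{sessym} (second part, valid in characteristic 0) gives
\[
0 \to \Sym^m T_X \to \Sym^m(T_{\P^3}|_X) \to \Sym^{m-1}(T_{\P^3}|_X) \otimes \O_X(3) \to 0.
\]
Twisting by $\O_X(-m)$ (since $-K_X = \O_X(1)$, this is the twist relevant to $\Sym^m T_X$ as it sits inside the section ring picture, but here we want $H^0(\Sym^m T_X)$ untwisted) — actually, more directly, I would just take global sections of the untwisted sequence, reducing the vanishing of $H^0(\Sym^m T_X)$ to control of $H^0(\Sym^m(T_{\P^3}|_X))$. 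The point is that $T_{\P^3}$ is a well-understood bundle: using the Euler sequence $0 \to \O_{\P^3} \to \O_{\P^3}(1)^{\oplus 4} \to T_{\P^3} \to 0$ and its symmetric powers, together with restriction to $X$ via $0 \to \O_{\P^3}(-3) \to \O_{\P^3} \to \O_X \to 0$, one computes $H^0(\P^3, \Sym^m T_{\P^3})$ and then $H^0(X, \Sym^m T_{\P^3}|_X)$ explicitly — these are spaces of certain polynomial vector fields. One should then check that every section of $\Sym^m(T_{\P^3}|_X)$ maps to a nonzero section in $\Sym^{m-1}(T_{\P^3}|_X) \otimes \O_X(3)$, i.e. that the connecting map $H^0(\Sym^m T_X) \to 0$ forces vanishing; equivalently, that the restriction map $H^0(\P^3, \Sym^m T_{\P^3}) \to H^0(X, \Sym^m T_{\P^3}|_X)$ and the projection interact so that no section of $\Sym^m T_{\P^3}|_X$ is annihilated by the surjection onto $\Sym^{m-1}(T_{\P^3}|_X)(3)$.

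I expect the main obstacle to be the cohomology bookkeeping: one needs vanishing statements of the form $H^0(X, \Sym^j(T_{\P^3}|_X) \otimes \O_X(-k)) = 0$ for appropriate ranges of $j, k$, and $H^1$ of various twists to split the symmetric-power filtration of the Euler sequence and the restriction sequence cleanly. Concretely, I would: (i) compute $H^0$ and $H^1$ of all twists $\Sym^a T_{\P^3}(b)$ on $\P^3$ from the Euler sequence (these reduce to cohomology of sums of $\O_{\P^3}(c)$, all explicit); (ii) restrict to $X$ using $0 \to \O_{\P^3}(-3) \to \O_{\P^3} \to \O_X \to 0$ tensored with $\Sym^a T_{\P^3}(b)$, tracking the long exact sequences; (iii) assemble these to show the relevant graded pieces of $H^0(X, \Sym^m T_X)$ vanish by induction on $m$. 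An alternative, possibly cleaner route — which I would try first — is to argue directly with $H^0(X, \Sym^m\Omega_X(m))$ using the conormal sequence $0 \to \O_X(-3) \to \Omega_{\P^3}|_X \to \Omega_X \to 0$ and the first part of Lemma~\ref{sessym}, getting $0 \to \O_X(-3)\otimes \Sym^{m-1}(\Omega_{\P^3}|_X) \to \Sym^m(\Omega_{\P^3}|_X) \to \Sym^m \Omega_X \to 0$, twisting by $\O_X(m)$, and showing $H^0(X, \Sym^m\Omega_{\P^3}|_X (m)) = 0$ — the latter following because $\Omega_{\P^3}$ has no sections of any bundle $\Sym^m \Omega_{\P^3}(k)$ for $k \leq m$, a statement provable from the dual Euler sequence, and restriction to the cubic $X$ only improves negativity. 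The delicate point in either approach is ensuring the connecting homomorphism in the long exact cohomology sequence does not produce spurious sections, i.e. that the relevant $H^1$ on the ambient $\P^3$ (or its twist by $\O(-3)$) vanishes so the restriction is surjective, or failing that, that the image is still killed.
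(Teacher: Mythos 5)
Your preferred second route is essentially the paper's own proof: reduce via $T_X\cong\Omega_X(1)$ to showing $H^0(X,\Sym^m\Omega_X(m))=0$, apply Lemma~\ref{sessym} to the conormal sequence $0\to\O_X(-3)\to\Omega_{\P^3}\res X\to\Omega_X\to0$, and control the restriction to $X$ using the vanishings $H^0(\P^3,\Sym^m\Omega_{\P^3}(e))=0$ for $e\le m$ together with $H^1$ and $H^2$ of the relevant twists on $\P^3$, the borderline case $e=m$ being handled exactly as you suggest via the dual Euler sequence in characteristic $0$. Your instinct to set aside the tangent-bundle variant also matches the paper, which notes that the tangent-bundle exact sequences lead to difficulties and that the cotangent bundle is the right object to work with.
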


This theorem immediately implies Theorem~\ref{notsuff}:
Set $R=S(X,\O_X(1))=\bigoplus_m H^0(X,\O_X(m))$.
By Theorem~\ref{hsi}, the theorem implies that $R$ has no differential operators of negative degree, and thus $R$ is not a simple $D_R$-module and $D_R$ is itself not a simple ring.
On the other hand,
 note that $X$ is Fano (since by adjunction $\omega_X=\O_X(-1)$). Thus $R$ has klt (thus also rational) singularities; since $X$ is a hypersurface $R$ is Gorenstein, and thus we've obtained the counterexample to Question~\ref{mainquest}  promised in Theorem~\ref{notsuff}. We note here that nothing in our results is specific to $\C[x,y,z,w]/(x^3+y^3+z^3+w^3)$, but applies just as readily to $\C[x,y,z,w]/(F)$ for any homogeneous cubic $F$ defining a smooth projective surface in $\P^3$.


\begin{rem}
Recent work of \cite{pseudoeff} has considered positivity properties of the tangent bundle of del Pezzo surfaces, and in particular of $T_X$. The notions of positivity they examine are analytic in nature, and in particular the definitions of ``big'' for vector bundles they consider is \emph{not} the same as the bigness of  $\O_{\P T_X}(1)$.  Thus, our result in Theorem~\ref{main} does not follow from  their results, and the methods we use here are much more algebraic in nature.
\end{rem}

\begin{lem}\label{cohP}
Let $n\geq 3$ and $m\geq 1$. Then:
\begin{enumerate}
\item $H^0(\P^n,\Sym^m \Omega_{\P^n}(e))=0$ for $e<m+1$.
\item $H^1(\P^n,\Sym^m \Omega_{\P^n}(e))=0$ for $e<m-1$.
\item $H^i(\P^n,\Sym^m \Omega_{\P^n}(e))=0$ for $1< i < n$ and any $e$.
\end{enumerate}
\end{lem}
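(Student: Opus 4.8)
The plan is to reduce all three statements to the Euler sequence
$$0 \to \Omega_{\P^n} \to \O_{\P^n}(-1)^{\oplus(n+1)} \to \O_{\P^n} \to 0$$
together with the elementary cohomology of line bundles on $\P^n$. Applying the second part of Lemma~\ref{sessym} (legitimate, since we are in characteristic $0$ and the quotient $\O_{\P^n}$ is a line bundle) and then twisting by $\O_{\P^n}(e)$, one gets a short exact sequence
$$0 \to \Sym^m\Omega_{\P^n}(e) \to \O_{\P^n}(e-m)^{\oplus a} \to \O_{\P^n}(e-m+1)^{\oplus b} \to 0,$$
where $a = \binom{n+m}{m}$ and $b = \binom{n+m-1}{m-1}$, using that $\Sym^j(\O_{\P^n}(-1)^{\oplus(n+1)}) = \O_{\P^n}(-j)^{\oplus\binom{n+j}{j}}$. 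All three claims then follow from the long exact cohomology sequence of this sequence combined with the facts that $H^i(\P^n,\O_{\P^n}(d)) = 0$ for $0 < i < n$ and $H^0(\P^n,\O_{\P^n}(d)) = 0$ for $d < 0$, keeping in mind $n \geq 3$.

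For part (3), when $1 < i < n$ the two terms flanking $H^i(\Sym^m\Omega_{\P^n}(e))$ in the long exact sequence are $H^{i-1}(\P^n,\O_{\P^n}(e-m+1))^{\oplus b}$ and $H^i(\P^n,\O_{\P^n}(e-m))^{\oplus a}$; since $n \geq 3$ both $i-1$ and $i$ lie strictly between $0$ and $n$, so both terms vanish and hence so does the middle one. For part (2), the relevant segment is $H^0(\P^n,\O_{\P^n}(e-m+1))^{\oplus b} \to H^1(\Sym^m\Omega_{\P^n}(e)) \to H^1(\P^n,\O_{\P^n}(e-m))^{\oplus a}$; the right-hand term vanishes because $0 < 1 < n$, and the left-hand term vanishes once $e - m + 1 < 0$, i.e. $e < m - 1$.

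For part (1), the short exact sequence gives an injection $H^0(\Sym^m\Omega_{\P^n}(e)) \hookrightarrow H^0(\P^n,\O_{\P^n}(e-m))^{\oplus a}$, whose target vanishes for $e < m$. The one borderline case, and the only place where the argument is not pure bookkeeping, is $e = m$: there the long exact sequence identifies $H^0(\Sym^m\Omega_{\P^n}(m))$ with the kernel of the map $\Sym^m W \to \Sym^{m-1}W \otimes W$ on global sections induced by the sequence above, where $W = H^0(\P^n,\O_{\P^n}(1))$. This map is the natural polarization map $x^\alpha \mapsto \sum_i \alpha_i\, x^{\alpha - e_i} \otimes x_i$, and composing it with the multiplication map $\Sym^{m-1}W \otimes W \to \Sym^m W$ yields multiplication by $m$ by Euler's identity $\sum_i x_i \partial_{x_i} f = (\deg f)\, f$; so in characteristic $0$ it is injective and the kernel is $0$. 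This finishes part (1) for all $e < m+1$. I expect this borderline identification (together with the one genuine use of characteristic $0$ beyond Lemma~\ref{sessym}) to be the only subtlety; the rest is the routine long-exact-sequence bookkeeping indicated above.
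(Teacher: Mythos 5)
Your proof is correct and follows essentially the same route as the paper: the Euler sequence, the symmetric-power short exact sequence from Lemma~\ref{sessym} (with the characteristic-zero caveat), and the same long-exact-sequence bookkeeping for (2), (3), and the $e<m$ range of (1). The only divergence is the borderline case $e=m$, where you show directly that the comultiplication $\Sym^m W\to \Sym^{m-1}W\otimes W$ is injective via Euler's identity, while the paper dualizes and shows the multiplication map $\Sym^{m-1}(V^\vee)\otimes V^\vee\to\Sym^m(V^\vee)$ is surjective --- dual formulations of the same characteristic-zero fact, with yours being slightly more direct.
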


\begin{proof}
The Euler sequence for $\Omega_{\P^n}$ is 
$$
0\to \Omega_{P^3}\to \O_{\P^n}(-1)^{\oplus n+1} \to \O_{\P^n}\to 0.
$$ 
Since all terms are locally free and the rightmost term has rank 1, by Lemma~\ref{sessym} we have a short exact sequence of symmetric powers
$$
0\to \Sym^m \Omega_{\P^n}\to \underbrace{\Sym^m(\O_{\P^n}(-1)^{\oplus n+1})}_{\bigoplus\O_{\P^n}(-m)} \to  \underbrace{\Sym^{m-1} (\O_{\P^n}(-1)^{\oplus n+1})}_{\bigoplus \O_{\P^n}(-m+1)}\to 0
$$
(the ranks are unimportant and we suppress them).
Twisting by  some $\O_{\P^n}(e)$ yields 
$$
0\to \Sym^m \Omega_{\P^n}(e)\to \bigoplus \O_{\P^n}(-m+e) \to \bigoplus \O_{\P^n}(-m+e+1)\to 0.
$$

Claim (1) of the lemma for $e<m$ follows (in any characteristic) by taking global sections, obtaining
$$
0\to H^0(\P^n,\Sym^m \Omega_{\P^n}(e))\to H^0\bigl(\P^n,\bigoplus \O_{P^3}(-m+e)\bigr),
$$
and noting that the right term vanishes for $-m+e<0$, but
for $e=m$ we must take a slightly different approach, for which a slight change in notation will be helpful:
Write $\P^n=\P(V)$ for a vector space $V$ of dimension $n$.
We twist the Euler sequence for the tangent bundle by $\O_{\P(V)}(1)$, obtaining 
$$
0\to 
\O_{\P(V)}(-1) \to
V^\vee
\otimes 
\O_{\P(V)}
\to 
T_{\P(V)}(-1)\to 0.
$$
Taking symmetric powers we obtain
$$
0\to 
\underbrace{\Sym^{m-1}(V^\vee\otimes \O_{\P(V)})(-1)}_{\Sym^{m-1}(V^\vee)\otimes \O_{\P(V)}(-1)}
\to
\underbrace{\Sym^m(V^\vee\otimes \O_{\P(V)})}_{\Sym^m(V^\vee)\otimes \O_{\P(V)}} \to
\Sym^m( T_{\P(V)}(-1))\to 0.
$$
Dualizing this sequence we have
$$
0\to 
\bigl(\Sym^m(T_{\P(V)}(1))\bigr)^\vee\to 
\Sym^m(V^\vee)^\vee \otimes\O_{\P(V)}\to
\Sym^{m-1}(V^\vee)^\vee\otimes \O_{\P(V)}(1)\to
0.
$$

Now, \emph{using that we're in characteristic 0}, we know that 
$$
\bigl(\Sym^m(T_{\P(V)}(1))\bigr)^\vee\cong \Sym^m(\Omega_{\P(V)}(1))=\Sym^m\Omega_{\P(V)}(m).
$$
Thus, to see that $H^0(\P^n,\Sym^m\Omega_{\P(V)}(m))=0$,
it suffices to show that the map
\begin{equation}\label{can}
H^0(\P^n,\Sym^m(V^\vee)^\vee \otimes\O_{\P(V)})\to
H^0\bigl(\P^n,\Sym^{m-1}(V^\vee)^\vee\otimes \O_{\P(V)}(1)\bigr)
\end{equation}
is injective.
But this is just the canonical map of vector spaces
$$
\Sym^m(V^\vee)^\vee \to \Sym^{m-1}(V^\vee)^\vee\otimes V.
$$
which is dual to the canonical multiplication
$$
\Sym^{m-1}(V^\vee)\otimes V^\vee \to  \Sym^m(V^\vee),
$$
which is obviously surjective, and thus \eqref{can} is injective and the $e=m$ case of claim (1) is shown.


For claim (2), the relevant terms of the long exact sequence are
$$
H^0\bigl(\P^n,\bigoplus  \O_{\P^n}(-m+e+1)\bigr)\to 
H^1(\P^n,\Sym^m \Omega_{\P^n}(e))\to 
H^1\bigl(\P^n,\bigoplus \O_{\P^n}(-m+e)\bigr).
$$
The right term vanishes always, while the left term is zero if $-m+e+1<0$.

Finally, claim (3) follows by examining the terms
$$
H^{i-1}\bigl(\P^n,\bigoplus  \O_{\P^n}(-m+e+1)\bigr)\to 
H^i(\P^n,\Sym^m \Omega_{\P^n}(e))\to 
H^i\bigl(\P^n,\bigoplus \O_{\P^n}(-m+e)\bigr) 
$$
and noting that the first and last terms vanish for any $e$ and $1< i <n$.
\end{proof}

\begin{lem}
\label{rest}
Let $m\geq 1$. Then:
\begin{enumerate}
\item 
$H^0(X,\Sym^m \Omega_{\P^3}\res X(m))=0$.
\item
$H^1(X,\Sym^{m}\Omega_{\P^3}\res X(m-3)) = 0$.
\end{enumerate}
\end{lem}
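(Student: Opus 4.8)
The plan is to reduce both vanishing statements on the cubic surface $X\subset\P^3$ to the corresponding statements on $\P^3$ recorded in Lemma~\ref{cohP}, using the structure sequence of the hypersurface $X$. Concretely, I would start from
$$
0\to \O_{\P^3}(-3)\to \O_{\P^3}\to \O_X\to 0,
$$
tensor it with the locally free sheaf $\Sym^m\Omega_{\P^3}(e)$ — which preserves exactness and, on the right, identifies $\Sym^m\Omega_{\P^3}(e)\otimes\O_X$ with $\Sym^m\Omega_{\P^3}\res X(e)$ — to obtain
$$
0\to \Sym^m\Omega_{\P^3}(e-3)\to \Sym^m\Omega_{\P^3}(e)\to \Sym^m\Omega_{\P^3}\res X(e)\to 0,
$$
and then run the long exact sequence in cohomology for the appropriate value of $e$.

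For part (1) I would take $e=m$, so that the relevant stretch of the long exact sequence is
$$
H^0\bigl(\P^3,\Sym^m\Omega_{\P^3}(m)\bigr)\to H^0\bigl(X,\Sym^m\Omega_{\P^3}\res X(m)\bigr)\to H^1\bigl(\P^3,\Sym^m\Omega_{\P^3}(m-3)\bigr).
$$
The left term vanishes by Lemma~\ref{cohP}(1) (since $m<m+1$) and the right term vanishes by Lemma~\ref{cohP}(2) (since $m-3<m-1$), which forces $H^0(X,\Sym^m\Omega_{\P^3}\res X(m))=0$.

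For part (2) I would instead take $e=m-3$, giving
$$
H^1\bigl(\P^3,\Sym^m\Omega_{\P^3}(m-3)\bigr)\to H^1\bigl(X,\Sym^m\Omega_{\P^3}\res X(m-3)\bigr)\to H^2\bigl(\P^3,\Sym^m\Omega_{\P^3}(m-6)\bigr).
$$
Here the left term vanishes by Lemma~\ref{cohP}(2) (again $m-3<m-1$) and the right term vanishes by Lemma~\ref{cohP}(3) (with $n=3$ and $i=2$, so $1<i<n$), whence $H^1(X,\Sym^m\Omega_{\P^3}\res X(m-3))=0$.

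I do not anticipate a genuine obstacle here: once Lemma~\ref{cohP} is in hand, the argument is pure diagram-chasing through the long exact cohomology sequence of the structure sequence. The only point that needs care is the bookkeeping of the numerical hypotheses — checking that the three twists $m$, $m-3$, and $m-6$ really land in the vanishing ranges of Lemma~\ref{cohP}, and that the standing assumption $m\geq 1$ is exactly what puts us in the scope of that lemma (with $n=3$).
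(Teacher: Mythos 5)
Your proposal is correct and is essentially identical to the paper's own argument: both parts are obtained by twisting the structure sequence $0\to\O_{\P^3}(-3)\to\O_{\P^3}\to\O_X\to 0$ by $\Sym^m\Omega_{\P^3}(m)$ (resp.\ $\Sym^m\Omega_{\P^3}(m-3)$) and invoking the vanishings of Lemma~\ref{cohP} for the outer terms of the long exact sequence, with the same numerical bookkeeping. No issues to flag.
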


\begin{proof}
We start with (1):
Twisting the short exact sequence $0\to\O_{\P^3}(-3)\to\O_{\P^3}\to \O_X\to 0$ by $\Sym^m\Omega_{\P^3}(m)$, we have
$$
0\to 
\Sym^m\Omega_{\P^3}(m-3)\to
\Sym^m\Omega_{\P^3}(m)\to
\Sym^m\Omega_{\P^3}\res X(m)\to
0.
$$
Taking the long exact sequence in cohomology we get 
\begin{equation}\label{ses}
H^0(X,\Sym^m\Omega_{\P^3}(m))\to
H^0(X,\Sym^m\Omega_{\P^3}\res X(m))\to
H^1(X,\Sym^m\Omega_{\P^3}(m-3)).
\end{equation}
By Lemma~\ref{cohP} the first and last terms vanish, and thus
$H^0(X,\Sym^m\Omega_{\P^3}\res X(m))=0$, as desired.

For (2), we twist
$0\to\O_{\P^3}(-3)\to\O_{\P^3}\to \O_X\to 0$ by $\Sym^m\Omega_{\P^3}(m-3)$ and take the long exact sequence, yielding 
relevant terms
$$
H^1(\P^3,\Sym^m\Omega_{\P^3}(m-3))\to
H^1(X,\Sym^m\Omega_{\P^3}\res X(m-3))\to
H^2(\P^3,\Sym^m\Omega_{\P^3}(m-6)).
$$
Again, Lemma~\ref{cohP} says that the outer terms vanish and thus
$H^1(X,\Sym^m\Omega_{\P^3}\res X(m-3))=0$.
\end{proof}

\begin{proof}[Proof of Theorem~\ref{main}]
By \ref{tangcotang}, we have that 
$$\Sym^m(T_X)=
\Sym^m(\Omega_X(1))=\Sym^m(\Omega_X)(m),$$
and so it suffices to show that
$$
H^0(X,\Sym^m(\Omega_X)(m))=0
$$
for all $m$.

We have a presentation
$$
0\to \O_X(-3)\to \Omega_{\P^3}\res X \to \Omega_X\to 0
$$
for $\Omega_X$;
taking symmetric powers and twisting by $\O_X(m)$, we have
$$
0\to \Sym^{m-1}(\Omega_{\P^3}\res X)(m-3)\to \Sym^m \Omega_{\P^3}\res X(m) \to \Sym^m \Omega_X (m)\to 0.
$$

Taking the long exact sequence in cohomology we have
$$
H^0(X,\Sym^m \Omega_{\P^3}\res X(m ))\to
H^0(X,\Sym^m\Omega_X(m))\to 
H^1(X,\Sym^{m-1}\Omega_{\P^3}\res X(m-3)).
$$
But Lemma~\ref{rest} implies immediately that the outer terms vanish, and thus so does the middle term, proving the theorem.
\end{proof}

\section{An alternate proof, and the case of degree-4 del Pezzos}
\label{new}

As mentioned in the introduction, 
the author realized after the initial draft of our results was made public that Theorem~\ref{main} follows immediately from work of \cite{BO}. In this section, we'll briefly discuss this, and then
use related work of \cite{OL} to treat the case of del Pezzos of degree 4.

We recall first:
\begin{thm}[{\cite[Theorem~B]{BO}}]
If $X\subset \P^n$ is a smooth hypersurface, then for any $m>1$,
$$
H^0(X,\Sym^m \Omega_X (m))\neq 0
$$
if and only if $X$ is a hyperquadric in $\P^n$.
\end{thm}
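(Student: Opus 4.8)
The statement is a biconditional, and I would prove its two halves separately. The substantive half --- and the only one used to deduce Theorem~\ref{main} --- is that $H^0(X,\Sym^m\Omega_X(m))\neq 0$ forces $X$ to be a hyperquadric. For this I would run the argument of Section~\ref{delpezzos} essentially verbatim, but over $\P^n$ instead of $\P^3$ and for a smooth hypersurface $X\subset\P^n$ of arbitrary degree $d$. The conormal sequence $0\to\O_X(-d)\to\Omega_{\P^n}\res X\to\Omega_X\to0$, after applying $\Sym^m$ (Lemma~\ref{sessym}) and twisting by $\O_X(m)$, reads
$$
0\to \Sym^{m-1}\Omega_{\P^n}\res X(m-d)\to \Sym^m\Omega_{\P^n}\res X(m)\to \Sym^m\Omega_X(m)\to 0,
$$
so $H^0(X,\Sym^m\Omega_X(m))=0$ will follow --- exactly as in the proof of Theorem~\ref{main} --- from the two vanishings $H^0(X,\Sym^m\Omega_{\P^n}\res X(m))=0$ and $H^1(X,\Sym^{m-1}\Omega_{\P^n}\res X(m-d))=0$.

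Both of these I would obtain by restricting the corresponding sheaves on $\P^n$ along $0\to\O_{\P^n}(-d)\to\O_{\P^n}\to\O_X\to0$ tensored with $\Sym^k\Omega_{\P^n}(\ell)$, as in Lemma~\ref{rest}, which reduces everything to the cohomology of $\Sym^\bullet\Omega_{\P^n}$ on $\P^n$ computed in Lemma~\ref{cohP}. Tracking the inequalities is the whole point: the first vanishing needs $H^0(\P^n,\Sym^m\Omega_{\P^n}(m))=0$ and $H^1(\P^n,\Sym^m\Omega_{\P^n}(m-d))=0$, which are Lemma~\ref{cohP}(1) (as $m<m+1$) and Lemma~\ref{cohP}(2) provided $m-d<m-1$, i.e.\ $d\geq2$; the second vanishing needs $H^1(\P^n,\Sym^{m-1}\Omega_{\P^n}(m-d))=0$ and $H^2(\P^n,\Sym^{m-1}\Omega_{\P^n}(m-2d))=0$, where the $H^2$ is free from Lemma~\ref{cohP}(3) once $n\geq3$, but the $H^1$ requires the \emph{strict} inequality $m-d<(m-1)-1$, i.e.\ $d\geq3$. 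Thus for every smooth hypersurface $X\subset\P^n$ with $n\geq3$ and $\deg X\geq3$ one gets $H^0(X,\Sym^m\Omega_X(m))=0$ for all $m$; the remaining non-quadric case $\deg X=1$ (a linear subspace $\P^{n-1}$) is immediate from Lemma~\ref{cohP}(1), with a separate elementary Bott computation on $\P^2$ when $n=3$. The point to stress is that it is precisely the $H^1$-vanishing above, which degenerates exactly at $d=2$, that singles out the hyperquadric.

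For the converse, that a smooth hyperquadric $X\subset\P^n$ does have $H^0(X,\Sym^m\Omega_X(m))\neq0$, I would exhibit a section by hand using the second fundamental form of the embedding, $\mathrm{II}\colon\Sym^2 T_X\to N_{X/\P^n}=\O_X(d)$: this is a global section of $(\Sym^2 T_X)^\vee\otimes\O_X(d)\cong\Sym^2\Omega_X(d)$ (using characteristic $0$ for $(\Sym^2 T_X)^\vee\cong\Sym^2\Omega_X$), and it lies in $\Sym^2\Omega_X(2)$ exactly when $d=2$ --- yet another reflection of why the quadric is the special case. For a smooth quadric $\mathrm{II}$ is in fact nowhere degenerate (diagonalize the defining quadratic form and pass to the Zariski tangent space $p^\perp/\langle p\rangle$), hence nonzero, and its powers $\mathrm{II}^{\otimes k}$, pushed into $\Sym^{2k}\Omega_X(2k)$ by the multiplication map of symmetric powers, remain nonzero since fibrewise they are $k$-th powers of a nondegenerate quadratic form. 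This yields $H^0(X,\Sym^{2k}\Omega_X(2k))\neq0$ for all $k\geq1$, which is what distinguishes hyperquadrics. (For odd $m$ and $X\cong\P^1\times\P^1$, a direct splitting of $\Sym^m\Omega_X(m)$ into line bundles shows the group vanishes, so the nonvanishing is genuinely a statement about even $m$.)

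I expect the ``only if'' direction to be, given Lemma~\ref{cohP}, essentially bookkeeping: the one thing demanding care is to arrange every Bott inequality so that $d=2$ emerges as the sharp threshold and the low-degree edge cases ($m=1$, $d=1$, and $n=3$) are all covered. The genuinely non-formal input is on the nonvanishing side --- recognizing that the section to use is the second fundamental form and verifying that for a smooth quadric hypersurface it is everywhere nondegenerate (equivalently, that $\Omega_X(1)\cong T_X(-1)$ carries a nondegenerate quadratic form) --- together with correctly identifying the range of $m$ for which the section survives. I would regard that, rather than the cohomology computation, as the main obstacle.
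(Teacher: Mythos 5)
You are in an unusual situation here: the paper does not prove this statement at all — it is quoted from \cite{BO}, and the paper explicitly describes that proof as going through a detailed study of the tangent map and the tangent 2-trisecant variety of $X\subset\P^n$. Your proposal is therefore a genuinely different route, namely the observation that the elementary argument of Section~\ref{delpezzos} is not really specific to cubic surfaces in $\P^3$. Your bookkeeping checks out: for a smooth hypersurface of degree $d$ in $\P^n$ with $n\geq 3$ and $m\geq 2$, the conormal sequence reduces the vanishing of $H^0(X,\Sym^m\Omega_X(m))$ to $H^0(\P^n,\Sym^m\Omega_{\P^n}(m))=0$ and $H^1(\P^n,\Sym^m\Omega_{\P^n}(m-d))=0$ (which by Lemma~\ref{cohP} need only $d\geq 2$), together with $H^1(\P^n,\Sym^{m-1}\Omega_{\P^n}(m-d))=0$ and $H^2(\P^n,\Sym^{m-1}\Omega_{\P^n}(m-2d))=0$, where the $H^1$ is exactly where the strict inequality $d\geq 3$ enters; moreover $H^1(\P^n,\Sym^{m-1}\Omega_{\P^n}(m-2))$ is genuinely nonzero (it is the cokernel in the Euler-sequence computation of Lemma~\ref{cohP}), so $d=2$ really is the threshold and not an artifact of a non-sharp estimate. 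The $d=1$ case and the quadric nonvanishing via the second fundamental form (nondegenerate at every point of a smooth quadric, with $k$-th powers surviving in $\Sym^{2k}\Omega_X(2k)$ in characteristic $0$) are also correct. What your approach buys is elementarity and generality in degree and dimension by the paper's own methods — in effect an algebraic proof of the hypersurface statement that the paper only obtains for cubic surfaces, and which is consistent with \cite[Theorem~1.4]{HLS} quoted in Remark~\ref{otherpaper}; what the \cite{BO} argument buys is its geometric content (tangent map, trisecants), which is not reproduced by this computation.

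One caveat about what exactly you have proved: as literally quoted, the statement reads as an equivalence for each fixed $m>1$, and in that form the ``if'' direction is false for odd $m$ — your own $\P^1\times\P^1$ splitting shows $H^0(\Sym^3\Omega_X(3))=0$ for the quadric surface, and the same vanishing for odd $m$ holds for quadrics of any dimension, since $\bigoplus_m H^0(Q,\Sym^m\Omega_Q(m))$ is generated in degree $2$ by the second fundamental form. So what your argument establishes (vanishing for all $m>1$ when $d\neq 2$, $n\geq 3$; nonvanishing for all even $m>1$ when $d=2$) is the statement under its intended reading ``nonvanishing for some $m>1$ if and only if $X$ is a hyperquadric,'' which is the only reading under which the quoted statement is true and is all that the application to Theorem~\ref{main} requires. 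Likewise $n\geq 3$ is implicitly required (for plane curves of degree $\geq 2$ the groups are nonzero), and this matches precisely where your argument invokes Lemma~\ref{cohP}(3); flagging these two points explicitly would make the write-up airtight.
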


Recall that if $X$ is a smooth cubic surface in $\P^3$, then we have an isomorphism $T_X\cong \Omega_X(1)$, and thus
applying the theorem we have immediately that
$$
H^0(X,\Sym^m T_X)= 
H^0(X,\Sym^m \Omega_X (m))= 0,
$$
thus recovering Theorem~\ref{main}.

We note that the proof of \cite[Theorem~B]{BO} involves a detailed study of the tangent map and the tangent 2-trisecant variety of the embedding $X\subset\P^n$, and thus the proof we gave in the preceding section is significantly more elementary, although correspondingly less general.

We now turn to the proof of the following theorem, using results of \cite{OL}:
\begin{thm}
Let $X$ be a del Pezzo surface of degree 4.
$T_X$ is not big.
\end{thm}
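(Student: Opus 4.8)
The plan is to run the argument of Section~\ref{delpezzos} (and of Theorem~\ref{main}) with the cubic surface $X\subset\P^3$ replaced by a degree-$4$ del Pezzo, which is a smooth complete intersection of two quadrics in $\P^4$. First I would record, exactly as for the cubic surface, that such an $X$ is Fano with $-K_X=\O_X(1)$: if $X=Q_1\cap Q_2\subset\P^4$ then adjunction for a complete intersection gives $K_X=(K_{\P^4}+2H+2H)\res X=-H\res X$. Lemma~\ref{tangcotang} then gives $T_X\cong\Omega_X(-K_X)=\Omega_X(1)$, so $\Sym^m T_X=\Sym^m\Omega_X(m)$, and by Proposition~\ref{bigsimple} (with $L=\O_X(1)$ and $e=1$) it suffices to prove $H^0(X,\Sym^m\Omega_X(m-1))=0$ for all $m$. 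As in the degree-$3$ case I would in fact aim for the stronger vanishing $H^0(X,\Sym^m\Omega_X(m))=0$, which implies what we need since multiplication by a section of $\O_X(1)$ embeds $H^0(X,\Sym^m\Omega_X(m-1))$ into $H^0(X,\Sym^m\Omega_X(m))$.

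To get a presentation of $\Omega_X$ whose kernel is a \emph{line} bundle --- so that Lemma~\ref{sessym} applies verbatim, just as in the proof of Theorem~\ref{main} --- I would not use the embedding $X\subset\P^4$ directly, where the conormal bundle is $\O_X(-2)^{\oplus 2}$ and one is forced into a longer Koszul-type resolution, but instead factor the embedding through a smooth quadric threefold. Since $X$ is cut out by a pencil of quadrics in $\P^4$ and that pencil has only finitely many singular members, a general member $Y$ is a smooth quadric threefold, and $X=Y\cap Q$ is an effective divisor on $Y$ with $\O_Y(X)\cong\O_Y(2)$, hence conormal bundle $\O_X(-2)$. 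This yields the conormal sequence $0\to\O_X(-2)\to\Omega_Y\res X\to\Omega_X\to 0$; taking $\Sym^m$ and twisting by $\O_X(m)$, Lemma~\ref{sessym} gives
$$
0\to\Sym^{m-1}(\Omega_Y\res X)(m-2)\to\Sym^m(\Omega_Y\res X)(m)\to\Sym^m\Omega_X(m)\to 0,
$$
so the target vanishing follows from $H^0(X,\Sym^m(\Omega_Y\res X)(m))=0$ and $H^1(X,\Sym^{m-1}(\Omega_Y\res X)(m-2))=0$. Restricting along $0\to\O_Y(-2)\to\O_Y\to\O_X\to 0$ (again $X$ is a degree-$2$ divisor in $Y$) reduces these, via the long exact sequence, to vanishing of $H^0$, $H^1$ and $H^2$ of the sheaves $\Sym^j\Omega_Y(e)$ on the quadric threefold $Y$ in appropriate ranges of $e$ --- precisely the role played by Lemma~\ref{cohP} ($\P^n$) in the degree-$3$ argument.

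The main obstacle is this last ingredient: the cohomology of symmetric powers of the cotangent bundle of a smooth quadric is not given by a formula as clean as Bott's on $\P^n$. Rather than reprove it, I would invoke the computations of \cite{OL} to supply the needed vanishing statements for $H^i(Y,\Sym^j\Omega_Y(e))$ (alternatively one could descend one more step through $0\to\O_Y(-2)\to\Omega_{\P^4}\res Y\to\Omega_Y\to 0$ and fall back on Bott's formula on $\P^4$, at the cost of more bookkeeping). Granting this, the rest is a diagram chase through the two long exact sequences above, modeled directly on Lemma~\ref{rest} and the proof of Theorem~\ref{main}; the only genuine care is that the degree-$2$ (rather than degree-$3$) conormal bundles shift all the numerical bounds, so one must check that every twist occurring along the way still lands inside the vanishing ranges provided by \cite{OL}. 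A minor secondary point is to confirm that a smooth quadric threefold through $X$ exists at all, i.e.\ that the pencil of quadrics cutting out $X$ has a smooth member, which holds because for smooth $X$ the discriminant of the pencil is a nonzero form of degree $5$.
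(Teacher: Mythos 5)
There is a genuine gap, and it is fatal to the strategy rather than a fixable detail: the ``stronger vanishing'' you aim for, $H^0(X,\Sym^m\Omega_X(m))=H^0(X,\Sym^m T_X)=0$, is simply false for a degree-$4$ del Pezzo. The paper's own proof shows (via Corollaries~2.1 and 3.1 of \cite{OL}, together with Zak's theorem that the Gauss map is birational) that $\bigoplus_m H^0(X,\Sym^m\Omega_X(m))\cong \Sym^{\bullet}H^0(X,\I_X(2))=\C[Q_1,Q_2]$; in particular $H^0(X,\Sym^2 T_X)$ is $2$-dimensional, as is noted explicitly in the remark after Corollary~\ref{classific}. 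So for degree $4$ the point is not that sections of $\Sym^m T_X$ vanish (as they do for the cubic surface), but that they grow only linearly in $m$, whereas bigness would require growth like $m^3$. Your proposed imitation of the degree-$3$ argument must therefore break down, and one can see exactly where: the needed input on the quadric threefold $Y$ fails, since quadrics are precisely the hypersurfaces with $H^0(\Sym^m\Omega(m))\neq 0$ (this is \cite[Theorem~B]{BO}, quoted in Section~\ref{new}); the square of the quadric form gives nonzero sections of $\Sym^2\Omega_Y(2)$, and these are exactly what produce the $Q_i$-sections on $X$. So the outer terms of your long exact sequences do not vanish and the diagram chase cannot close.

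Your fallback observation that it would suffice to prove $H^0(X,\Sym^m\Omega_X(m-1))=0$ for all $m$ (i.e.\ $H^0(\Sym^m T_X\otimes L^{-1})=0$, which by Proposition~\ref{bigsimple} with $e=1$ does imply non-bigness) is correctly stated, but you give no route to it other than the false stronger vanishing, and the conormal-sequence approach would again require vanishing statements on the quadric threefold that are not available. The paper's actual argument is structurally different: it does not prove any vanishing at all, but instead identifies the whole graded space of sections and rules out bigness by the asymptotic growth criterion. If you want to salvage your approach, you would have to replace ``vanishing'' by ``dimension count'' throughout, i.e.\ track how many sections survive each step --- at which point you are essentially reconstructing the computation of \cite{OL} rather than invoking it as a vanishing black box.
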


\begin{proof}
Let $X\subset \P^4$ is the anticanonical embedding of $X$ as a complete intersection of two quadrics in $\P^4$, say $Q_1,Q_2$. Let $f:\bigcup_{x\in X} T_x X\to \P^4$ be the tangent map of $X$, which associates to a point in a tangent plane to $x\in X$ the corresponding point of $\P^4$.
Combining Corollaries 2.1 and 3.1 of \cite{OL}, we have that if:
\begin{equation}
\label{star}
\tag{$*$}
 \text{$f$ is surjective with connected fibers}
\end{equation}
then there's a graded isomorphism
$ \bigoplus H^0(X,\Sym^m \Omega_X(m))=\Sym^{\bullet} H^0(X,\I_X(2))=\C[Q_1,Q_2] $
(where $\deg Q_1=\deg Q_2=2$). Assuming (1) and (2), then, we have that $H^0(X,\Sym^{2m}\Omega_X(2m))$ has as basis the set of degree-$m$ monomials in the $Q_i$, and thus grows like $m$ rather than $m^3$, and thus $\Omega_X(m)$ is not big.
Since we already
 know that 
$ T_X = \Omega_X(m)$ (using that $X$ is a surface embedded by its anticanonical divisor), this implies that $T_X$ is not big.

So, all that remains is to show that \eqref{star} holds. 
First, note that $\bigcup_{x\in X} T_x X $ has dimension~4, so to obtain surjectivity of the tangent map $f$ it suffices to check that it's generically finite. Since the tangent map is injective on each tangent plane $T_xX$, it suffices to check that a general point of $\P^4$ lies on only finitely many tangent planes to $X$.
This follows immediately, however, from the fact that the Gauss map $\gamma:X\to \Gr(2,\P^4)$ associating to a point $x\in X$ the tangent hyperplane $T_x X \in \Gr(2,\P^4)$  is not just generically finite, but birational (see  \cite[Corollary~2.8]{Zak}). 

Note that this implies that $f$ itself is generically injective and dominant, and thus $f$ is in fact birational.
This immediately gives connectivity of the fibers $f\inv(p)$ for $p\in \P^4$:
Since the tangent map $f$ is a birational morphism onto the smooth variety $\P^4$, Zariski's main theorem implies that $f$ has connected fibers, and thus the proof is complete.
\end{proof}

\begin{cor}
\label{classific}
Let $X_i$ be a del Pezzo surface of degree $i$. Then $T_{X_i}$ is not big for $i< 5$.
\end{cor}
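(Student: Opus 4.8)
The plan is to dispatch $i = 3$ and $i = 4$ by the two theorems just proved, and to reduce $i = 1, 2$ to the case $i = 3$. The mechanism is that the vanishing of $H^0(\Sym^m T)$ --- and hence non-bigness of the tangent bundle of a surface --- is inherited under blow-ups: if $\pi \colon X \to Y$ is a birational morphism of smooth projective surfaces and $H^0(Y, \Sym^m T_Y) = 0$ for all $m$, then the same holds for $X$.

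First I would establish this mechanism. The differential $d\pi \colon T_X \to \pi^* T_Y$ is an isomorphism over the dense open locus on which $\pi$ is an isomorphism, so its kernel is a subsheaf of the locally free --- hence torsion-free --- sheaf $T_X$ supported in codimension $\geq 1$, and thus vanishes; so $d\pi$ is injective. For the same reason $\Sym^m d\pi \colon \Sym^m T_X \to \Sym^m \pi^* T_Y = \pi^* \Sym^m T_Y$ is injective (it is an isomorphism over a dense open and has torsion-free source). Taking global sections and using the projection formula together with $\pi_* \O_X = \O_Y$ (valid as $\pi$ is proper birational onto the normal $Y$) gives
$$
H^0(X, \Sym^m T_X) \hookrightarrow H^0(X, \pi^* \Sym^m T_Y) = H^0(Y, \pi_* \pi^* \Sym^m T_Y) = H^0(Y, \Sym^m T_Y).
$$
Since $T_X$ has rank $2$ on the surface $X$, bigness of $T_X$ is equivalent to $h^0(X, \Sym^m T_X)$ growing like $m^{3}$, so the vanishing of all of these groups in particular rules out bigness.

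Next I would invoke the classical structure theory of del Pezzo surfaces: a del Pezzo surface of degree $d$ with $1 \leq d \leq 7$ is the blow-up at a closed point of a del Pezzo surface of degree $d+1$ (contract a $(-1)$-curve); concretely, writing a del Pezzo of degree $1$ or $2$ as a blow-up of $\P^2$ at $8$ or $7$ points in general position and forgetting a suitable subset, one obtains a birational morphism onto a del Pezzo surface of degree $3$. Combined with the mechanism above and Theorem~\ref{main} (which gives $H^0(X_3, \Sym^m T_{X_3}) = 0$ for all $m$ and all del Pezzo $X_3$ of degree $3$), this yields $H^0(X_i, \Sym^m T_{X_i}) = 0$ for all $m$ when $i \in \{1, 2\}$, so $T_{X_i}$ is not big for these $i$. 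The case $i = 3$ is Theorem~\ref{main}, and $i = 4$ is the theorem proved earlier in this section via \cite{OL}. As del Pezzo surfaces have degree between $1$ and $9$, this covers all $i < 5$.

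I expect the only genuinely substantive point to be the injectivity of $\Sym^m d\pi$; this is a routine torsion-freeness argument, so I do not anticipate a serious obstacle, and the remainder is bookkeeping together with standard del Pezzo geometry.
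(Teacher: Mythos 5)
Your argument is correct, and its engine is exactly the one the paper uses: for a birational morphism $\mu\colon X\to Y$ of smooth surfaces, the differential gives an injection $T_X\hookrightarrow \mu^*T_Y$, the torsion-freeness of $\Sym^m T_X$ makes $\Sym^m T_X\hookrightarrow \mu^*\Sym^m T_Y$ injective, and the projection formula with $\mu_*\O_X=\O_Y$ yields $H^0(X,\Sym^m T_X)\hookrightarrow H^0(Y,\Sym^m T_Y)$. The only difference is the choice of base case: the paper blows down the del Pezzos of degree $1$, $2$, $3$ to the degree-$4$ surface $X_4$ and concludes non-bigness from the fact (via the description $\bigoplus_m H^0(X_4,\Sym^m\Omega_{X_4}(m))\cong\C[Q_1,Q_2]$) that $h^0(X_4,\Sym^m T_{X_4})$ grows only linearly in $m$, whereas you blow the degree-$1$ and degree-$2$ surfaces down to a smooth cubic and quote the full vanishing $H^0(X_3,\Sym^m T_{X_3})=0$ of Theorem~\ref{main}, treating $i=3,4$ directly by the two theorems of the section. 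Your variant buys a slightly stronger statement: it shows $H^0(X_i,\Sym^m T_{X_i})=0$ for all $m$ when $i\le 3$ (the paper's route through $X_4$ only gives non-bigness there, and indeed cannot give vanishing, since $H^0(X_4,\Sym^2 T_{X_4})\neq 0$); the paper's choice, on the other hand, makes the degree-$3$ case a formal consequence of the degree-$4$ one. Your appeal to del Pezzo structure theory is fine for what you need: a degree-$1$ or degree-$2$ del Pezzo is the blowup of $\P^2$ at $8$ or $7$ points in general position, and forgetting all but $6$ of them (a subset of points in general position is still in general position) gives a birational morphism onto a smooth cubic surface.
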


\begin{proof}
First consider $X_4$, which is the blowup of $\P^2$ at five general points.
$-K_{X_4}$ embeds $X_4$ as the intersection of two smooth quadrics in $\P^4$, which we've just seen does not have big tangent bundle.
If $i<4$, we can view $X_i$ as the blowup of $X_4$ at $i-4$ general points, say $\mu: X_i\to X_4$. We have an injection
$$
T_{X_i}\hookrightarrow \mu^* T_{X_4};
$$
taking the $m$-th symmetric power yields a morphism
$$
\Sym^m T_{X_i}\hookrightarrow \Sym^m \mu^* T_{X_4}=\mu^* \Sym ^m T_{X_4}.
$$
This must be an injection, since 
$
T_{X_i}\hookrightarrow \mu^* T_{X_4}
$
is generically an isomorphism, so $\Sym^m T_{X_i}\to \Sym^m \mu^* T_{X_4}$ is generically an isomorphism well and thus has torsion kernel, but $\Sym^m T_{X_i}$ is locally free and thus cannot have a torsion subsheaf.
Taking global sections and noting that $H^0(X_i,\mu^* \Sym^m T_{X_4})=H^0(X_4,\Sym^m T_{X_4})$ since $\mu_* \O_{X_i}=\O_{X_4}$, we thus have a containment
$$
H^0(X_i,\Sym^m T_{X_i})\to H^0(X_4,\Sym^m  T_{X_4}),
$$
and thus
$H^0(\Sym^m T_{X_i})$ cannot grow like $m^3$.
\end{proof}

\begin{rem}
In particular, as mentioned at the beginning of this section, once we know that $T_{X_4}$ is not big, $T_{X_3}$ cannot be big either; however, our result above actually shows that $H^0(X_3,\Sym^m T_{X_3})=0$ for all $m$, which does not follow from our treatment of $T_{X_4}$, as we saw above  that $H^0(X_4,\Sym^2 T_{X_4})$ is $2$-dimensional.
\end{rem}

\section{A partial converse}
\label{conv}

The preceding section showed that given a Fano variety $X$ and an ample line bundle $L$, the section ring $S(X,L)$ may not be $D$-simple, even though it has only a Gorenstein rational singularity.
Even though this is not true, however, one can give a partial converse:

\begin{prop}
\label{uniruled}
Let $X$ be a smooth complex projective variety.
If $T_X$ is big then $X$ is uniruled.
\end{prop}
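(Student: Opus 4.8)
The plan is to show that bigness of $T_X$ forces $X$ to carry enough sections of $\Sym^m T_X$ (twisted down by an ample divisor) to produce rational curves through a general point, using the standard Miyaoka–Mori–type criterion for uniruledness. More precisely, I would argue as follows. Fix an ample line bundle $L$ on $X$; by Proposition~\ref{bigsimple}, bigness of $T_X$ gives, for some fixed $e>0$ and some $m\gg 0$, a nonzero section of $\Sym^m T_X\otimes L^{-e}$. Rephrasing this, $\Sym^m T_X$ has a section vanishing (in the appropriate sense) along an ample divisor, so in particular $\Sym^m T_X$ is not only effective but ``effective-with-room-to-spare.'' The goal is to deduce from this that the restriction of $T_X$ to a general curve $C$ in $X$ (a general complete intersection curve of high degree) has a quotient of positive degree, which by Miyaoka's bend-and-break–based criterion implies $X$ is uniruled.

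The key steps, in order: (1) Reduce to a statement about a general curve. Let $C$ be a general member of $|L^{\otimes d}|^{\cap(n-1)}$ for $d\gg 0$, a smooth projective curve; by general position / Mehta–Ramanathan–type reasoning $C$ can be chosen so that $T_X|_C$ has the same Harder–Narasimhan behaviour as the ``generic'' one. (2) Observe that a nonzero section of $\Sym^m T_X \otimes L^{-e}$ restricts to a nonzero section of $\Sym^m(T_X|_C)\otimes (L^{-e}|_C)$ for $C$ general (the section's zero locus doesn't contain a general $C$), so $\Sym^m(T_X|_C)$ has a subsheaf of positive degree, namely a copy of $L^{e}|_C$, which is ample of positive degree on $C$. (3) Conclude that $T_X|_C$ is not semistable of nonpositive degree; in fact the maximal destabilizing piece, or at least some quotient, has positive degree — equivalently $\mu_{\max}(T_X|_C)>0$. (4) Invoke the theorem of Miyaoka (see Miyaoka–Mori, or \cite[Ch.~IV]{LazarsfeldII}-type references): if for a general curve $C$ cut out by ample divisors the restricted tangent bundle $T_X|_C$ has positive maximal slope (equivalently is not a subsheaf-theoretically nonpositive bundle), then $X$ is uniruled. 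This is exactly Miyaoka's uniruledness criterion, and applying it finishes the proof.

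The main obstacle is Step (3)–(4): making precise the passage from ``$\Sym^m T_X$ is big'' to ``$T_X|_C$ has a positive-degree quotient (or positive maximal slope) on a general curve,'' and then citing the correct form of Miyaoka's criterion. One has to be a little careful: bigness of $\Sym^m T_X\otimes L^{-e}$ gives a section, hence a positive-degree \emph{sub}sheaf of $\Sym^m(T_X|_C)$, and one needs that this implies $\mu_{\max}(\Sym^m(T_X|_C))>0$, hence $\mu_{\max}(T_X|_C)>0$ since $\mu_{\max}$ is multiplicative-ish under symmetric powers in characteristic $0$ (the maximal destabilizing subsheaf of a symmetric power comes from that of the bundle). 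Then Miyaoka's theorem says $\mu_{\max}(T_X|_C)>0$ for $C$ general implies $T_X$ is ``generically positive'' in his sense, which implies uniruledness. The bookkeeping around generic curves and semistability is the only delicate part; the rest is formal.

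Alternatively — and this may be the cleanest route — one can avoid restricting to curves entirely: bigness of $T_X$ implies $T_X$ is not pseudoeffective-with-nef-$\det$ in a way that contradicts the characterization, due to Boucksom–Demailly–Păun–Peternell, that $X$ is \emph{not} uniruled if and only if $K_X$ is pseudoeffective; and if $T_X$ is big then $-K_X = \det T_X$ is big, in particular $K_X$ is not pseudoeffective, so $X$ is uniruled. This one-line argument (big $T_X$ $\Rightarrow$ $\det T_X = -K_X$ big $\Rightarrow$ $K_X$ not pseudoeffective $\Rightarrow$ $X$ uniruled, by \cite{BDPP}) is likely the intended proof; the only thing to check is that $E$ big implies $\det E$ big, which follows because $\O_{\P E}(1)$ big implies its pushforward-determinants grow maximally, or more simply because $\det E$ is a quotient of $\Sym^{\mathrm{rk}E}E$ up to twist and bigness passes to such quotients. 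I would present this short argument as the main proof and mention the curve-theoretic argument as an alternative.
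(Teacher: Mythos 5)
Your curve-theoretic argument is essentially the paper's own proof: the paper likewise fixes an ample $L$, takes a nonzero section of $\Sym^m T_X\otimes L^{-1}$, restricts it to a general complete intersection curve $C$, and contradicts Miyaoka's generic semipositivity theorem ($X$ not uniruled implies $\Omega_X|_C$ nef). The only real difference is bookkeeping: instead of $\mu_{\max}$ and its behavior under symmetric powers, the paper uses that $\Sym^m\Omega_X|_C$ is nef (nefness is preserved by symmetric powers in characteristic $0$) plus an elementary lemma that a line bundle of positive degree on a curve cannot inject into a bundle whose dual is nef; your version works provided you justify $\mu_{\max}(\Sym^m(T_X|_C))\le m\,\mu_{\max}(T_X|_C)$, which is the characteristic-$0$ input playing the same role.

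The alternative you say you would present as the main proof, however, has a genuine gap. With the paper's convention (L-bigness, i.e.\ bigness of $\O_{\P E}(1)$), bigness of $E$ does \emph{not} imply bigness of $\det E$, and your justification fails on both counts: $\det E=\Lambda^{r}E$ is a summand of $E^{\otimes r}$, not a quotient of $\Sym^{r}E$; and L-bigness does not pass to quotients (consider the quotient map $\O_X\oplus A\to \O_X$ with $A$ ample: the source is big, the target is not). The implication itself is false: on the blow-up $S$ of $\P^2$ at a point, with $H$ the pullback of a line and $E_0$ the exceptional curve, the bundle $E=\O_S(H)\oplus\O_S(-E_0)$ is big in the paper's sense, since $h^0(\Sym^m E)=\sum_{k}h^0\bigl(\O_S(kH-(m-k)E_0)\bigr)$ grows like $m^{3}=m^{\dim S+\operatorname{rk}E-1}$, yet $\det E=\O_S(H-E_0)$ has $(H-E_0)^2=0$ and is not big. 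So the chain ``$T_X$ big $\Rightarrow$ $-K_X=\det T_X$ big $\Rightarrow$ $K_X$ not pseudoeffective $\Rightarrow$ $X$ uniruled by BDPP'' breaks at the first step, and rescuing it would require an argument specific to $T_X$ that is not supplied; the restriction-to-curves argument (yours or the paper's) is the one that actually closes the proof.
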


\begin{proof}

First, we recall the following theorem of Miyaoka:

\begin{thm}[{{\cite[Corollary~8.6]{Miyaoka}}}]
 If a smooth complex projective variety $X$ is not uniruled then $\Omega_X$ is generically nef, i.e., $\Omega_X\res C$ is nef for a general complete intersection curve $C\subset X$.
\end{thm}

Now, say $X$ is not uniruled but $T_X$ is big. Take $L$ to be an ample line bundle on $X$ and consider a nonzero global section
$s\in H^0( X,\Sym^m T_X \otimes L\inv)$. Choosing a general complete intersection curve $C\subset X$, which by generality will not lie in the zero locus of $s$, we obtain a nonzero global section $s\res C\in  H^0(C,\Sym^m T_X\res C \otimes L\res C\inv)$.
We can view this nonzero global section equivalently as an injection
$$
\O_C\hookrightarrow  \Sym^m T_X\res C\otimes L\res C\inv,
$$
or as an injection
$$
L\res C\hookrightarrow  \Sym^m T_X \res C.
$$
Moreover, we note that
$(\Sym^m T_X\res C)^\vee=\Sym^m \Omega_X \res C$, and that since $\Omega_X \res C$ is nef so is $\Sym^m \Omega_X \res C$ (by \cite[Theorem~6.2.12(iii)]{LazarsfeldII}).

\begin{lem}
If $C$ is a smooth curve, $L$ a line bundle on $C$ of positive degree (thus ample), and $E$ a vector bundle on $C$ with $E^\vee$ nef, then there is no injection $L\hookrightarrow E$.
\end{lem}

\begin{proof}
Say we have $L\hookrightarrow E$. The cokernel $Q:=E/L$ may not be torsionfree, but we may consider 
the surjection
$$
E\to Q\to Q/\text{torsion}.
$$
Since $C$ is a smooth curve, $Q':=Q/\text{torsion}$ is locally free, and thus 
so is the kernel of 
the surjection
$$
E\to Q'.
$$
Call this locally free kernel $L'$; it's clear $L'$ is a line bundle containing $L$, and thus $\deg L'\geq \deg L>0$.
The short exact sequence of locally free sheaves 
$$
0\to L'\to E\to Q'\to 0
$$
dualizes to 
$$
0\to (Q')^\vee \to E^\vee \to (L')^\vee \to 0.
$$
However, $E^\vee$ was supposed to be nef, yet the quotient $(L')^\vee$ is not, and we thus have a contradiction, so there can be no injection $L\hookrightarrow E$.
\end{proof}

Applying this lemma with $E=\Sym^m T_X$ and $L=L\res C$, we obtain Theorem~\ref{uniruled}.
\end{proof}

The following theorem  recovers and extends \cite[Corollary~4.49]{quantifying}, which treated the Gorenstein case. 

\begin{thm}
\label{partnec}
Let $R$ be a normal $\Q$-Gorenstein graded $\C$-algebra, generated in degree 1, with an isolated singularity. If $R$ is $D$-simple, then $\Proj(R)$ is Fano, and thus $R$ has klt singularities, and thus rational singularities.
\end{thm}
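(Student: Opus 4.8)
The idea is to reduce Theorem~\ref{partnec} to Proposition~\ref{uniruled} together with Theorem~\ref{hsi} and the dictionary between the singularities of $R=S(X,L)$ and the positivity of the (anti)canonical bundle of $X=\Proj R$ recorded in Remark~\ref{propsings}. Since $R$ is generated in degree~$1$, we have $R=S(X,L)$ where $X=\Proj R$ and $L=\O_X(1)$ is very ample; since $R$ has an isolated singularity, $X$ is smooth, and since $\dim R\geq 2$ (the isolated-singularity hypothesis rules out the smooth case, where there is nothing to prove) $X$ has dimension $\geq 1$; one checks $\dim X\geq 2$ is not actually needed, but if one wants to invoke Theorem~\ref{hsi} verbatim one first disposes of the curve case, where $R$ is a cone over a smooth curve and $D$-simplicity of such cones is classical (or: a smooth curve has $T_X$ big iff it is $\P^1$, i.e. Fano, directly). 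So assume $\dim X\geq 2$.

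\textbf{Main steps.} First, because $R$ is $\Q$-Gorenstein, the third bullet of Remark~\ref{propsings} gives $L^{\otimes b}\cong \O_X(aK_X)$ for some integers $a,b$ with $b\neq 0$; replacing $(a,b)$ appropriately we may take $b>0$, so $-K_X$ is ample, anti-ample, or numerically trivial according to the sign of $a$. Second, $D$-simplicity of $R$ implies (Theorem~\ref{hsi}) that $T_X$ is big. Third, Proposition~\ref{uniruled} then shows $X$ is uniruled; in particular $K_X$ is not pseudoeffective, so the case $a\geq 0$ (where $\O_X(aK_X)$ would be effective, or trivial, forcing $aK_X$ pseudoeffective) is excluded — we must have $a<0$, i.e. $-K_X$ ample, i.e. $X$ is Fano. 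Fourth, with $X$ Fano and $L$ a positive rational multiple of $-K_X$, the section ring $R=S(X,L)$ is $\Q$-Gorenstein with $-K_X$ ample, hence klt by item~(3) of the list following Theorem~\ref{hsi}, and klt implies rational singularities by Elkik's theorem (first bullet of Remark~\ref{propsings}). This chain gives the conclusion; note the statement about "merely admits a differential operator of negative degree" follows identically, since Theorem~\ref{hsi} only uses that hypothesis.

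\textbf{The delicate point.} The one place requiring care is the deduction, from uniruledness together with $L^{\otimes b}\cong\O_X(aK_X)$, that $a<0$: one must rule out $a=0$ (which would make $L$ torsion, contradicting ampleness of $L$, so this is immediate) and rule out $a>0$ (which would make $-K_X$ anti-ample, so $K_X$ ample, so $X$ of general type, hence certainly not uniruled — a clean contradiction with Step~3). So in fact the positivity dictionary does all the work once uniruledness is in hand, and the genuine mathematical input is entirely contained in the already-established Theorem~\ref{hsi} and Proposition~\ref{uniruled}; the proof of Theorem~\ref{partnec} itself is a short assembly. The only mild subtlety worth a sentence in the write-up is confirming that the isolated-singularity hypothesis does force $X=\Proj R$ to be \emph{smooth} (the singular locus of $\Spec R$ is the cone point, whose complement is the punctured cone, an $\mathbb{A}^1$-bundle over $X$, so $X$ is smooth), which is what licenses applying Theorem~\ref{hsi} and Proposition~\ref{uniruled} in the first place.
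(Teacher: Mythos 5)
Your proposal is correct and follows essentially the same route as the paper: $D$-simplicity gives bigness of $T_X$ via Theorem~\ref{hsi}, Proposition~\ref{uniruled} gives uniruledness, and the $\Q$-Gorenstein relation $K_X\sim r\cdot L$ combined with uniruledness forces $-K_X$ ample, whence klt by Koll\'ar's lemma and rational by Elkik. The only cosmetic difference is how you exclude $r\geq 0$ (general-type/pseudoeffectivity contradiction rather than the paper's direct vanishing of plurigenera $H^0(X,mK_X)=0$ for uniruled $X$), which is an equally valid and essentially equivalent step.
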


\begin{proof}
Let $X=\Proj R$, with $L=\O_X(1)$ the corresponding ample line bundle. Thus, we have that $R=S(X,L)$ is the section ring of the smooth projective variety $X$ under the 
projectively normal 
embedding defined by $L$. Since $R$ is $\Q$-Gorenstein, we must have that $K_X\sim r \cdot L$ for $r\in \Q$.
$D$-simplicity of $R$ forces $T_X$ to be big.
Thus, applying Theorem~\ref{uniruled}, we have that $X$ must be uniruled.

Since $X$ is uniruled, we must have $H^0(X,mK_X)=0$ for all $m>0$.
But for $m$ sufficiently large and divisible we have $mK_X\sim a \cdot L$ for $a=mr\in \Z$, $|a|\gg0$. Thus $H^0(X,a L)=0$ for $a\gg0 $, and by ampleness of $L$ we must have that $a<0$, so $r<0$ and $-K_X$ is ample. Thus $X$ is Fano and embedded by a multiple of its canonical divisor, so $S(X,L)$ has klt singularities by \cite[Lemma~3.1]{kollar}.
\end{proof}

\begin{rem}
If in the above $R$ is moreover Gorenstein, then $R$ has rational singularities (see Remark~\ref{propsings}).
\end{rem}


\section{Relationship to differential operators in characteristic $p$}
\label{poschar}

As mentioned in Section~\ref{diffops}, part of the motivation for the conjectural relationship between klt singularities and $D$-simplicity is the equivalence of $D$-simplicity and $F$-regularity for $F$-pure varieties, and the analogy between $F$-regularity in characteristic $p$ and klt singularities in characteristic 0.
In this section, we give a brief discussion of these analogies.

\begin{rem}
By \cite[Theorem~5.1]{SchwedeSmith}, a smooth (or klt) Fano variety $X$ over $\C$ has globally $F$-regular type; this means that if one looks at various models $X_p$ of $X$ over finite fields $\F_p$, then $X_p$ is globally $F$-regular for almost all $p$; this in turn is equivalent to the section ring $S(X_p,L_p)$ being strongly $F$-regular for any ample line bundle $L_p$ on $X_p$. We avoid giving a formal definition here, but the following example is indicative of the general process, at least in the case where $X$ can be defined over the subring $\Z\subset \C$:

\begin{exa}
Let $X=\Proj \C[x,y,z,w]/(x^3+y^3+z^3+w^3)$ be a smooth cubic surface. Then for each prime $p$, we have $X_p=\Proj \F_p[x,y,z,w]/(x^3+y^3+z^3+w^3)$; then we have a natural choice of section ring $S(X_p,L_p)=\F_p[x,y,z,w]/(x^3+y^3+z^3+w^3)$.
 For $p\geq 5$, it's easy to check that this is strongly $F$-regular (for example, by using Fedder's criteria \cite{Fedder}).
Thus, $X$ has $F$-regular type.
\end{exa}
\end{rem}

\begin{rem}
For any $p\geq 5$,
since $\F_p[x,y,z,w]/(x^3+y^3+z^3+w^3)$ is strongly $F$-regular (and $F$-pure), it is $D$-simple, and in particular has differential operators of negative degree.
However, our above results showed that $\C[x,y,z,w]/(x^3+y^3+z^3+w^3)$ is \emph{not} $D$-simple, as it has no differential operators of negative degree.
That is, there is no hope in general to ``lift'' differential operators of negative degree from characteristic $p$ to characteristic 0. This offers another example of the phenomena discussed in \cite{smith}, where the ring $R_p=(\Z/p\Z)[x,y,z]/(x^3+y^3+z^3)$ is shown to have a degree-0 differential operator for $p\equiv 1 \mod 3$ (i.e., those $p$ such that $R_p$ is $F$-split) that does not arise as the image of a differential operator on $\Z[x,y,z]/(x^3+y^3+z^3)$. As is noted there, this shows that there are ``more'' differential operators in positive characteristic. The example of this note is further evidence for this heuristic: in positive characteristic $p\geq 5$,
$\F_p[x,y,z,w]/(x^3+y^3+z^3+w^3)$ has differential operators of arbitrarily negative degree, while $\C[x,y,z,w]/(x^3+y^3+z^3+w^3)$ has no differential operators of negative degree.
\end{rem}

\begin{rem}
Recent work of
\cite{quantifying} introduced an invariant $s(R)$ of a ring $R$, called the \emph{differential signature}. One always has $0\leq s(R)\leq 1$, and if $s(R)>0$ then $R$ is $D_R$-simple.
We won't recall the definition of this invariant here, but
want to note briefly that
our
 results give an example of the contrasting behavior of $s(R)$ in positive characteristic and characteristic 0:

Again, let 
$R_p = \F_p[x,y,z,w]/(x^3+y^3+z^3+w^3)$
and 
$R = \C[x,y,z,w]/(x^3+z^3+z^3+w^3)$. Since $R$ is not $D_R$-simple, the differential signature of $R$ (over $\C$) must be zero. On the other hand, $R_p$ is strongly $F$-regular for each $p$, so it has positive $F$-signature; moreover, one can calculate using \cite{diagF} the limit of the $F$-signatures as $p$ goes to infinity to be 1/8. By \cite[Lemma~5.15]{quantifying}, this bounds the limit of the differential signatures of $R_p$ (over $\Z$) away from 0.  
Thus, one cannot expect to calculate the differential signature in characteristic 0 as a limit of differential signatures in characteristic $p$ as $p\to \infty$.
For further discussion on this question, see \cite[Section~5.3]{quantifying}.
\end{rem}

\begin{rem}
Although the main result of this paper is that the characteristic-0 analogue of ``strongly $F$-regular implies $D$-simple and $F$-pure'' is false,
another interesting connection to characteristic $p$ arises from considering the potential characteristic-0 converse, that is, does a $D$-simple ring with log canonical singularities necessarily have klt singularities?

We note that this follows from the conjectural relation between $F$-purity and log canonical singularities, as follows:
Let $R$ be a $D$-simple $\Q$-Gorenstein essentially finite-type $\C$-algebra and assume that $R$ has log canonical singularities. 
One can choose a finite-type $\Z$-algebra $A$ and an essentially finite-type $A$-algebra $R_A$ such that $R_A\otimes_\A \C=R$, and consider the reductions of $R_A$ modulo the expansion of various maximal ideals of $A$. For simplicity, we assume that we can take $A$ to be $\Z$, although the general case proceeds in the same way.

Conjecturally (see, e.g., \cite[Conjecture~2.4]{conjec}), since $R$ is log canonical there is a dense (not necessarily open) subset of $\Z$ such that the reduction $R_p$ is $F$-pure (i.e., $R$ is of dense $F$-pure type) for $p$ in this subset. By \cite[Theorem~5.2.1]{smithvdB}, $D$-simplicity of $R$ descends to $D$-simplicity of $R_p$ for $p$ in an \emph{open} dense subset of $\Z$ as well. An open dense set and an arbitrary dense set intersect in a dense subset, and thus over a dense subset of $\Z$, $R_p$ is $F$-pure and $D$-simple, and thus strongly $F$-regular by \cite[Theorem~2.2]{smith}. Thus, $R$ is of (dense) strongly $F$-regular type. Theorem~3.3 of \cite{Fpuretype} then implies that $R$ is klt (and thus also has rational singularities).

It would be interesting to have a proof that $D$-simple plus log canonical implies klt that does not rely on reduction to positive characteristic.
\end{rem}

\section{Big tangent bundles in characteristic 0}
\label{zerochar}

In this section, we briefly review what is known about bigness of the tangent bundle for smooth complex projective varieties; throughout, $X$ will denote such a variety.

\begin{rem}

By \cite{wahl}, 
if 
$X$ is a smooth projective variety and $L$ an ample line bundle, then
$H^0(X, T_X\otimes L^{-1})\neq 0$
 forces $X$ to be be projective space $\P^n$, and additionally $L=\O_{\P^n}(1)$  (except if $n=1$ in which case $L$ might be $\O_{\P^1}(2)$).
That is, if $\dim X \geq 2$ and $R=S(X,L)$ has a derivation of negative degree, then $X$ must be the projective $n$-space, and $R$ just a polynomial~ring.

It appears that less is known about the potential nonvanishing of
global sections of 
the higher symmetric powers
$H^0(X,\Sym^m T_X\otimes L^e)$.
The following result is as much as is known to us:

\begin{thm}[{{\cite[Theorem~6.3]{ADK}}}]
Let $X$ be a smooth complex projective variety of Picard number 1 and $L$ an ample line bundle.
If $H^0(X,T_X^{\otimes m}\otimes L^{-m})\neq 0$, then either $X=\P^n$ and $L=\O_{\P^n}(1)$ or $Q$ is a quadric hypersurface and $L$ is the restriction of the hyperplane class from the ambient projective space.
\end{thm}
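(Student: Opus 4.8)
The plan is to deduce, from the uniruledness criterion of the previous section, that $X$ is Fano of nearly maximal index, and then to invoke the Kobayashi--Ochiai characterization of $\P^n$ and of smooth quadrics. (When $\dim X=1$ the assertion is immediate, with $X=\P^1$ and $L=\O_{\P^1}(1)$ or $\O_{\P^1}(2)$, so I assume $\dim X=n\geq2$.)

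First I would establish that $X$ is Fano. A nonzero section $s$ of $T_X^{\otimes m}\otimes L^{-m}$ restricts nontrivially to a general complete intersection curve $C\subset X$, giving an injection $(L\res C)^{\otimes m}\hookrightarrow(T_X\res C)^{\otimes m}$ of an ample line bundle into a vector bundle. If $X$ were not uniruled, then by Miyaoka's theorem \cite[Corollary~8.6]{Miyaoka} the bundle $\Omega_X\res C$, hence $(\Omega_X\res C)^{\otimes m}=\bigl((T_X\res C)^{\otimes m}\bigr)^\vee$, would be nef, contradicting the lemma used in the proof of Proposition~\ref{uniruled}. Thus $X$ is uniruled. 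Fixing a minimal dominating family of rational curves and a general (free, standard) member $f\colon\P^1\to X$ with image $\ell$, one has $(f^*T_X)\cong\O_{\P^1}(2)\oplus\O_{\P^1}(1)^{\oplus p}\oplus\O_{\P^1}^{\oplus(n-1-p)}$ for some $p\geq0$, so $(-K_X)\cdot\ell=\deg f^*T_X=p+2>0$; since $\rho(X)=1$ this forces $-K_X$ ample, i.e.\ $X$ is Fano, and then $\operatorname{Pic}(X)\cong\Z$, with ample generator $H$, so $L=d_0H$ and $-K_X=\iota_XH$ for integers $d_0,\iota_X\geq1$.

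Next I would pin down the polarization. Set $d=L\cdot\ell=d_0(H\cdot\ell)$. Because the curves $\ell$ cover $X$ and $s\neq0$, the pullback $f^*s$ is a nonzero section of $(f^*T_X)^{\otimes m}\otimes\O_{\P^1}(-dm)$, which is a direct sum of line bundles on $\P^1$ of degree $\leq(2-d)m$; hence $d\leq2$. If $d=2$, this bundle has a single summand of degree $0$, namely $\bigl(\O_{\P^1}(2)\bigr)^{\otimes m}\otimes\O_{\P^1}(-2m)\cong\O_{\P^1}$, whose fiber over a general point of $\ell$ is spanned by $v_\ell^{\otimes m}$, where $v_\ell$ generates $df(T_{\P^1})\subset f^*T_X$; so $f^*s$ lies in that summand, and therefore $s(x)\in\langle v_\ell(x)^{\otimes m}\rangle$ for \emph{every} minimal rational curve $\ell$ through a general point $x$. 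As $\rho(X)=1$ and $n\geq2$, not all such $\ell$ share a tangent direction at $x$ (otherwise the common tangent directions would define a rank-one foliation by rational curves, hence a nontrivial fibration of $X$, impossible when $\rho(X)=1$), so $s(x)$ lies in an intersection of distinct lines in $(T_xX)^{\otimes m}$ and hence vanishes --- contradicting $s\neq0$. Thus $d=1$: $L=H$ is the primitive polarization, $\ell$ is an $H$-line, and $\iota_X=(-K_X)\cdot\ell=p+2$.

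The remaining step --- which I expect to be the real obstacle --- is to prove $\iota_X\geq n$, equivalently $p\geq n-2$, i.e.\ that $f^*T_X$ has at most one trivial summand. Granting this, $\iota_X\in\{n,n+1\}$ and the Kobayashi--Ochiai theorem yields either $X\cong\P^n$ with $L=\O_{\P^n}(1)$ or $X$ a smooth quadric hypersurface with $L$ the restriction of the hyperplane class, which is the conclusion. The only elementary constraint on $p$ is that $f^*s$ cannot lie in the ``flat part'' $\bigl(\O_{\P^1}^{\oplus(n-1-p)}\bigr)^{\otimes m}\otimes\O_{\P^1}(-m)$, since that bundle has no sections --- but by itself this does not bound $p$, because summands such as $\bigl(\O_{\P^1}(2)\bigr)^{\otimes m}\otimes\O_{\P^1}(-m)=\O_{\P^1}(m)$ do carry sections. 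To get the bound one must exploit the \emph{globality} of $s$ together with the geometry of the variety of minimal rational tangents $\mathcal C_x\subset\P(T_xX)$ at a general point, showing that the section forces $\dim\mathcal C_x=p\geq n-2$; this is precisely the content of the argument of \cite{ADK}, which rests on the variety-of-minimal-rational-tangents machinery of Hwang and Mok and on the characterizations of projective space due to Cho--Miyaoka--Shepherd-Barron and to Kebekus. The purely cohomological techniques used in Sections~\ref{delpezzos} and \ref{conv} do not seem to reach this.
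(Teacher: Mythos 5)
The theorem you are asked about is not proved in the paper at all: it is imported verbatim as \cite[Theorem~6.3]{ADK}, so the only ``proof'' the paper offers is the citation. Measured as a proof, your proposal does not close this gap: by your own admission the decisive step --- the index bound $\iota_X\geq n$, equivalently $p\geq n-2$, which is what makes the Kobayashi--Ochiai characterization applicable --- is simply deferred back to \cite{ADK}. That bound \emph{is} the content of the theorem. The reductions you do carry out (uniruledness via Miyaoka exactly as in Proposition~\ref{uniruled}, Fano-ness from $\rho(X)=1$ plus a free minimal rational curve, the bound $L\cdot\ell\leq 2$, and the exclusion of $L\cdot\ell=2$) are comparatively soft and only show that $X$ is Fano, polarized by the ample generator, of index $p+2\geq 2$; this is satisfied by, say, every smooth Fano hypersurface of degree $\geq 3$, i.e.\ precisely the varieties the theorem is supposed to rule out. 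So the proposal has a genuine, acknowledged gap at the heart of the statement, and the honest conclusion is that one must either keep the citation (as the paper does) or reproduce the variety-of-minimal-rational-tangents analysis of \cite{ADK}, which lies outside the cohomological techniques of Sections~\ref{delpezzos}--\ref{conv}.

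A secondary caveat: even your exclusion of the case $L\cdot\ell=2$ rests on an unproved assertion, namely that the minimal rational curves through a general point cannot all share a tangent direction. When the variety of minimal rational tangents at a general point is a single point (which does occur for some index-$2$ situations, e.g.\ conics on low-index Fanos), the parenthetical ``a rank-one foliation by rational curves contradicts $\rho(X)=1$'' needs an actual argument --- algebraicity of the leaves and the resulting contradiction are themselves part of the Hwang--Mok machinery, not a one-line observation. So even the portion of the argument you present as complete quietly borrows from the same circle of ideas you defer to for the main step.
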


Since in characteristic 0 we can embed $\Sym^m T_X\hookrightarrow T_X^{\otimes m}$, this implies that if $X$ is as above, and not a projective space or a hyperquadric, then
$H^0(X,\Sym^m T_X\otimes L^{-m})=0$.
That is, for such $X$ we can rule out differential operators on $R=S(X,L)$ of order $m$ and degree $-m$.
We do not know if one can extend this theorem to varieties with higher Picard number. 
\end{rem}

We emphasize that results of the above form are very specific to characteristic 0; for example, \cite{wahl} gives the example of the ring $R=(\Z/2\Z)[x_0,x_1,x_2]/(x_0^2+x_1x_2)$; $\Proj R$ is a smooth quadric, but $\d/\d x_0$ is a differential operator on $R$ of order 1 and degree $-1$.

\begin{rem}
Bigness of the tangent bundle of a smooth projective variety is known in the following cases:
\begin{itemize}
\item projective spaces.
\item quadrics (of any dimension).
\item varieties $X$ admitting an ample line bundle $L$ such that the section ring $S(X,L)$ is a split summand of a polynomial ring, and thus in particular:
\item smooth toric varieties.
\item Grassmannians and (partial) flag varieties.
\item when $T_X$ is nef (conjecturally, by \cite{CP} this is equivalent to $X$ being rational homogeneous) and $\dim X\leq 3$.
\item products of the above varieties.
\end{itemize}

Note that while many of these are Fano varieties, not all are (e.g., many toric varieties).  However, by \cite{bigness}, if $T_X$ is big \emph{and nef} then $X$ is Fano.  Nefness of $T_X$ is quite a restrictive condition though, and as already mentioned, it is conjectured in \cite{CP} to be equivalent to $X$ being rational homogeneous.
\end{rem}

We note here that other positivity properties of $T_X$ are well-studied, and appear to be quite restrictive: Beyond the aforementioned conjecture on nefness, there is the celebrated result of Mori \cite{Mori} proving a conjecture of Hartshorne that if $T_X$ is ample then $X\cong \P^n$.
It is thus natural to ask about the following question:

\begin{quest}
What conditions on a smooth complex variety $X$, beyond uniruledness, are imposed by bigness of the tangent bundle $T_X$?
\end{quest}

\begin{rem}[recent work]
\label{otherpaper}
Since an initial draft of this paper was made public, there has been additional progress towards this question through work of \cite{HLS}. There, the authors prove the following results:

\begin{enumerate}
\item (\cite[Theorem~1.2]{HLS}) Let $X_i$ be a del Pezzo surface of degree $i$. Then $T_X$ is big if and only if $i\geq 5$.
\item (\cite[Theorem~1.4]{HLS}) Let $X$ be a hypersurface of degree $d$ in $\P^n$ for $n\geq 3$. Then $T_X$ is big if and only if $d=2$.
\end{enumerate}

We note that the missing case that (1) settles is that of the del Pezzo of degree 5, as those of degree 6 or higher are toric (and hence have big tangent bundle) and those degree 4 or lower are covered by the results here. However, their methods are able to treat all the non-toric del Pezzos in a uniform way, via the study of the dual variety of minimal rational tangents. The case of the del Pezzo of degree 5 illustrates the subtlety of the question of when a Fano variety has big tangent bundle: the del Pezzo of degree 5 is not toric, and in fact has finite automorphism group, and yet has big tangent bundle, while the del Pezzos of lower degree do not have big tangent bundle.
(2) sheds further light on our question above, and indicates that one may expect bigness of $T_X$ to be quite restrictive, as it implies that projective space and hyperquadrics are the only smooth hypersurfaces to have big tangent bundle.
\end{rem}

\bibliographystyle{alpha}
\bibliography{link}

\end{document}